 \newif\ifTextures    
 \newif\ifnoTextures  
 \newif\ifComentarios 
\newcommand{\C}{\mathbb{C}}                       
\newcommand{\Z}{\mathbb{Z}}                         
\newcommand{\N}{\mathbb{N}}                        
\newcommand{\Tt}{\widetilde{\mathcal{T}}}    
\newcommand{\T}{\mathcal{T}}                         
\newcommand{\TT}{\boldsymbol{\T}}     
\newcommand{\gZ}{\mathcal{Z}}                       
\newcommand{\G}{\mathcal{G}}                        
\newcommand{\R}{\mathcal{R}}                        
\newcommand{\B}{\mathcal{B}}                        
\newcommand{\M}{\boldsymbol{M}}                    
\renewcommand{\L}{\boldsymbol{\mathcal{L}}}                      
\newcommand{\X}{\boldsymbol{X}}              
\newcommand{\Y}{\boldsymbol{Y}}              
\newcommand{\F}{\boldsymbol{\mathcal{F}}}            
\newcommand{\Suc}{\mathcal{S}}                      
\DeclareMathOperator{\len}{long}                      
\DeclareMathOperator{\iso}{Iso}                         
\newcommand{\red}[2]{ #1 \kern -0.5ex \mid_{#2}}
\newcommand{\norm}[1]{\parallel \! #1 \! \parallel}
\newcommand{\den}[1]{\mid \! #1 \!  \mid}
\renewcommand{\epsilon}{\varepsilon} 
\renewcommand{\leq}{\leqslant}
\renewcommand{\geq}{\geqslant}
\renewcommand{\vec}[1]{\mathbf{#1}}
\newcommand{\inte}[1]{\mathaccent23{#1}}
\newcommand{\aristaA}{\xy (0,0)*{^0};(2,0)*{\bullet};(6,0)*{\bullet}**\dir{-} \endxy}
\newcommand{\aristaC}{\xy (6,0)*{^0};(0,0)*{\bullet};(4,0)*{\bullet}**\dir{-} \endxy}
\newcommand{\aristaB}{\xy (2,0)*{^0};(0,0)*{\bullet};(0,4)*{\bullet}**\dir{-} \endxy}
\newcommand{\aristaD}{\xy (2,4)*{^0};(0,0)*{\bullet};(0,4)*{\bullet}**\dir{-} \endxy}
      \newtheorem{thm}{Th\'eor\`eme}[section]
    \newtheorem{prop}[thm]{Proposition}
    \newtheorem{thrg}[thm]{Th\'eor\`eme de r\'ealisation g\'eom\'etrique}
\theoremstyle{definition}
    \newtheorem{defn}[thm]{D\'efinition}
\def\jour{\small \number\day \space
\ifcase\month\or
 Janvier\or F\'evrier\or Mars\or Avril\or Mai\or Juin\or
 Juillet\or Ao\^ut\or Septembre\or Octobre\or Novembre\ord
D\'ecembre\fi \space\number\year}
\title{Dynamique transverse de la lamination de Ghys-Kenyon\thanks{Financ\'e par 
Ministerio de Ciencia y Tecnolog\'{\i}a BFM2002-04439, Ministerio de Educaci\'on y Ciencia MTM2004-08214 et Universidad del Pa\'{\i}s Vasco UPV 00127.310-E-14790/2002.}}
\author{F. Alcalde Cuesta$^1$, A. Lozano Rojo$^{2}$ et  M. Macho Stadler$^2$
  \bigskip \\ 
 {\footnotesize  $^1$  Departamento de Xeometr\'{\i}a e Topolox\'{\i}a, Universidade de Santiago de Compostela,}  \vspace{-0.7ex} \\ {\footnotesize 15782 Santiago de Compostela (Espagne)} \\
 {\footnotesize $^2$ Departamento de Matem\'aticas, Universidad del Pa\'{\i}s Vasco-Euskal Herriko Unibertsitatea,}  \vspace{-0.7ex} \\  {\footnotesize 48940 Leioa (Espagne)}}
 \date{}
\begin{document}
\maketitle

\section{Introduction}

 Il y a des laminations minimales par surfaces de Riemann o\`u les
types conformes des feuilles se m\'elangent.  Le premier exemple
a \'et\'e construit par  \'E. Ghys \cite{ghys2} \`a partir d'un
arbre ap\'eriodique et r\'ep\'etitif d\'ecrit par R. Kenyon \cite{kenyon}.  La construction comporte deux \'etapes distinctes, valables pour tout sous-graphe r\'ep\'etitif du graphe de Cayley $\G$ d'un groupe infini de type fini $G$. Il s'agit d'abord de construire un espace compact, muni d'un  feuilletage par graphes, puis d'obtenir une lamination par surfaces de Riemann.  Soit $\T = \T(G)$  l'ensemble des sous-graphes infinis de $\G$ contenant l'\'el\'ement neutre $e$ de $G$. On munit $\T$ de la {\em topologie de Gromov-Hausdorff} 
pour laquelle deux sous-graphes de $\G$ sont proches s'ils co\"{\i}ncident sur une grande boule centr\'ee en $e$. Puisqu'une boule ne contient qu'un nombre fini de sous-graphes, un proc\'ed\'e diagonal classique montre que $\T$ est compact. Gr\^ace \`a l'action 
de $G$ sur  $\G$, on d\'efinit une relation d'\'equivalence $\R$ qui identifie un arbre  $T$ et son translat\'e  $T' = g^{-1}.T$ si $g \in T$. On peut d'ailleurs r\'ealiser  $\T$ comme un sous-espace d'un espace m\'etrique compact $\TT= \boldsymbol{\T(G)}$, muni d'un feuilletage par graphes $\F$ dont toutes les feuilles sont rencontr\'ees par $\T$.  Alors $\R$ est induite par  $\F$  et les classes d'\'equivalence sont les ensembles de sommets des feuilles de $\F$.
\medskip

Pour tout graphe $T \in \T$, l'ensemble $X = \overline{\R[T]}$ est un ferm\'e satur\'e pour $\R$, appel\'e l'{\em enveloppe de $T$}. Il est r\'ealisable comme transversale compl\`ete d'un espace feuillet\'e compact $\X$, \`a savoir  la fermeture de la feuille $L_T \in \F$ passant par $T$. Les ensembles $X$ et $\X$ sont minimaux si et seulement si le graphe $T$ est {\em r\'ep\'etitif}, i.e. pour tout nombre r\'eel 
$r > 0$, il existe un nombre r\'eel $R > 0$ tel que toute boule de rayon $R > 0$ contient une boule qui est l'image par translation de la boule de centre $e$ et rayon $r > 0$. Par ailleurs, l'holonomie de $L_T$ est triviale si et seulement si $T$ est {\em ap\'eriodique}, i.e. $T \neq g.T$ pour tout \'el\'ement $g \neq e$ de $G$. Ces d\'efinitions s'inspirent de d\'efinitions analogues pour les pavages \cite{bbg, rw}.
\medskip

$$
\includegraphics[width=2.8in]{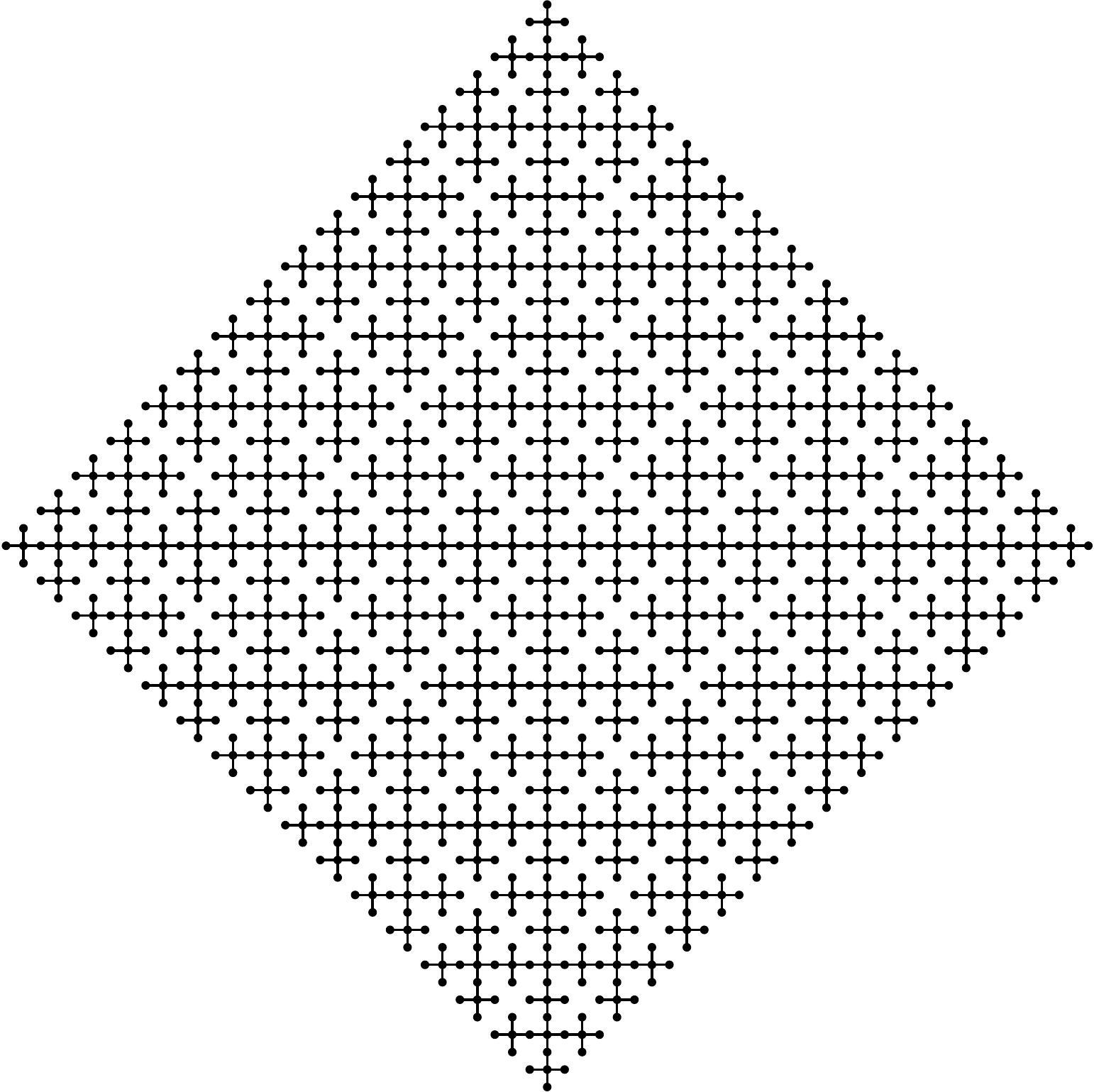}
$$
\begin{center}
Figure 1: Arbre de Kenyon  \label{figKenyon}
\end{center}
\setcounter{figure}{1}
\medskip

Nous appellerons {\em espace feuillet\'e de Ghys-Kenyon} la fermeture $\X$ de la feuille $L_{T_\infty}$ passant par l'arbre de Kenyon $T_\infty$ (voir la figure \ref{figKenyon}) dans l'espace  
$\TT= \boldsymbol{\T({\mathbb Z}^2)}$, 
munie du feuilletage induit par $\F$. Les feuilles sont des sous-arbres r\'ep\'etitifs et ap\'eriodiques du graphe de Cayley $\gZ^2$ de ${\mathbb Z}^2$. En rempla\c{c}ant ces arbres par des surfaces, on obtient la lamination $(\M,\L)$ d\'ecrite par \'E. Ghys. Nous l'appellerons  {\em lamination de Ghys-Kenyon}. Dans ce travail, nous allons r\'ecuperer l'espace $\X$ par un proc\'ed\'e de construction de sous-arbres r\'ep\'etitifs et ap\'eriodiques de $\gZ^2$ \`a partir de suites de $4$ \'el\'ements.  Gr\^ace \`a codage, nous  montrerons que la dynamique transverse de ce feuilletage est repr\'esent\'ee par l'automate suivant: 
\medskip 

\[
\xy 
(-10,0)*{0};
(-10,0)*\xycircle(2.65,2.65){-}="A"; 
"A";
(10,0)*\xycircle(2.65,2.65){-}="B"; 
"B";
(10,0)*{1};
{\ar@/^1pc/(-7.5,1)*{};(7.5,1)*{}};
{\ar@/^1pc/(7.5,-1)*{};(-7.5,-1)*{}};
(-12.8,1)*{}="a";
(-12.8,-1)*{}="b";
"a"; "b" **\crv{(-16,7.5) & (-28,0) & (-16,-7.5)}?(1)*\dir{>}; 
(12.8,1)*{}="a";
(12.8,-1)*{}="b";
"a"; "b" **\crv{(16,7.5) & (28,0) & (16,-7.5)}?(1)*\dir{>}; 
\endxy
\]
\vspace{-1ex}
\begin{center}
Figure 2: Machine \`a sommer binaire \label{fig2adique}
\end{center}
\setcounter{figure}{2}
\bigskip 

\noindent
Cela signifie que $\R$ est {\em stablement orbitalement \'equivalente} \`a la relation engendr\'ee par la somme $S(x) =  x +1$ d\'efinie sur  l'anneau des entiers $2$-adiques, ou de mani\`ere \'equivalente par  la transformation 
$T : \ \{0,1\}^{\N} \to  \ \{0,1\}^{\N}$ 
d\'efinie par:

\noindent i) si $\alpha_0=0$, alors  $T(\alpha)_0=1$ et
$T(\alpha)_n=\alpha_n$ pour tout
 $n\geq 1$, 

\noindent ii) si $\alpha_0=1$, alors $T(\alpha)_0=0$ et
$T(\alpha)_1=T(\sigma(\alpha))_0$ avec $\sigma(\alpha)_n=\alpha_{n+1}$. 
 
\noindent
Nous compl\'eterons l'\'etude de l'exemple en d\'ecrivant sa dynamique topologique.  Nous montrerons ainsi que $\L$ est  {\em affable}, en ce sens que $\R$ est la limite inductive d'une suite de relations d'\'equivalence \'etales compactes  \cite{gps04}.  La dynamique transver\-se de $\L$ sera ainsi repr\'esent\'ee par un syst\`eme dynamique classique. 
\vspace{-1ex}

\section{L'espace feuillet\'e de Gromov-Hausdorff} \label{GH}

Soit $S$ un syst\`eme fini de g\'en\'erateurs de $G$. Le \textit{graphe de Cayley} $\G = \G(G,S)$ est un graphe localement fini non orient\'e, sans boucle, ni ar\^ete multiple, dont les sommets sont les \'el\'ements de $G$. Deux sommets $g_1$ et $g_2$ sont reli\'es par une ar\^ete si $g_1^{-1}g_2 \in S$. On appelle  \textit{longueur} de $g$ le plus petit nombre d'\'el\'ements de $S$ n\'ecessaires pour  \'ecrire $g$, i.e.
$\len_S(g)=\min\{n\geq 1 /  \mbox{$g=s_1...s_n$ avec $s_i s_{i+1} \neq e$} \}$.
La \textit{distance des $S$-mots} est alors donn\'ee par $d_S(g_1,g_2) = \len_S(g_1^{-1} g_2)$
pour tout couple $g_1, g_2 \in G$. Cette distance se prolonge en une distance sur $\G$ telle que toute ar\^ete est isom\'etrique \`a l'intervalle $[0,1]$. Le graphe de Cayley $\G$ devient ainsi un espace m\'etrique connexe par chemins sur lequel le groupe $G$ agit par isom\'etries.

\setcounter{thm}{0}

\subsection{Topologie de Gromov-Hausdorff}

Soit $\T = \T(G)$  l'ensemble des sous-graphes $T$ de $\G$ contenant l'\'el\'ement neutre $e$ de $G$. Notons $B_T(e,N)$ (resp. $\overline{B}_T(e,N)$) la boule ouverte (resp. ferm\'ee) de centre $e$ et de rayon $N$ et  $val_T(e)$ la  valence de $e$, i.e. le nombre d'ar\^etes issues de $e$.
Consid\'erons l'ensemble $A = \{ \ N\geq 1 \ /  \  B_T(e,N)=B_{T'}(e,N) \ \}$ 
 et la quantit\'e
 $$
R(T,T') = \begin{cases}
\ \sup \ A  &  \ \ si \ A \neq \emptyset , \\
\  0  & \ \ si \ A = \emptyset ,
\end{cases}
$$
qui appartient \`a ${\mathbb N} \cup \{+\infty\}$ pour tout couple $T, T' \in \T$.  On d\'efinit alors la \textit{distance de Gromov-Hausdorff} par $d(T,T') = e^{-R(T,T')}$. C'est une ultram\'etrique et donc $\T$ est totalement disconnexe. Puisque  la boule ferm\'ee $\overline{B}_{\G}(e,N)$ ne contient qu'un nombre fini de sous-graphes, un proc\'ed\'e diagonal classique montre que $\T$ est 
compact. Les sous-graphes finis de $\G$ correspondent aux points isol\'es de $\T$. Nous noterons d\'esormais  $\T$ l'ensemble des sous-graphes {\em infinis} de $\G$ contenant l'\'el\'ement neutre $e$. L'avantage de la nouvelle d\'efinition est mise en \'evidence par le fait que $\T = \T(G)$ est alors hom\'eomorphe \`a l'ensemble  de Cantor, sauf si $G = {\mathbb Z}$. 

\setcounter{thm}{0}

\subsection{Structure feuillet\'ee}

L'espace $\T$ est muni d'une relation d'\'equivalence $\R$ qui
identifie deux graphes $T$ et $T'$ si  $T' = g^{-1}.T$ avec $g
\in T$.  Toute classe d'\'equivalence $\R[T]$ peut \^etre alors r\'ealis\'ee comme l'ensemble de sommets d'un graphe $\overline{\R}[T]$. Il suffit de joindre 
$T' = g^{-1}.T$ et $T''= h^{-1}.T$ par une ar\^ete si $d_S(g,h) = 1$. Le graphe $\overline{\R}[T]$ est  donc isomorphe au quotient de $T$ par le groupe de translations $\iso(T)=\{g\in \G / T=g.T\}$. C'est une feuille de l'espace feuillet\'e compact fourni par le r\'esultat suivant:
\medskip

\begin{thrg} \label{th:TRG1}
Il  y a un espace compact, m\'etrisa\-ble et s\'eparable $\boldsymbol\T$, muni d'un feuilletage par graphes $\F$, pour lequel $\T$ est une transversale compl\`ete et $\R$ est la relation d'\'equivalence induite sur $\T$.
\end{thrg}

\begin{proof} Consid\'erons le sous-espace $\Tt = \{ (T,g) \in \T \times \G / \mbox{ $g$ est un sommet de $T$} \}$ de $ \T \times \G$, muni de la pseudodistance
$d((T_1,g_1),(T_2,g_2)) = d(g_1^{-1}.T_1,g_2^{-1}.T_2)$.  Alors $\T$ est le quotient de $\Tt $  par l'action diagonale de $G$ sur 
$\T \times \G$. Chaque classe d'\'equivalence $\R[T]$ est obtenue par passage au quotient \`a partir de l'orbite de $(T,e)$. L'ensemble 
$\widetilde{U}_{(T_1,g_1)} =  \overline{B}_{\Tt }((T_1,g_1), e^{-1})  = \{  (T_2,g_2) \in \Tt  / \overline{B}_{g_1^{-1}.T_1}(e,1) = 
\overline{B}_{g_2^{-1}.T_2}(e,1) \}$
est un ouvert-ferm\'e qui se projette sur l'ouvert-ferm\'e
$U_{g_1^{-1}.T_1} = \overline{B}_{\T}(g_1^{-1}.T_1, e^{-1})$.
Puisque  $\overline{B}_{\G}(e,1)$ ne contient qu'un nombre fini de sous-graphes,  les ensembles $\widetilde{U}_{(T_1,g_1)}$ et $U_{g_1^{-1}.T_1}$ d\'efinissent des partitions finies de $\Tt$ et $\T$ respective\-ment. 
Nous allons  remplacer $\Tt$ par l'ensemble $\boldsymbol{\Tt}$ des couples $(T,x)$ o\`u $x$ est un point quelconque de $T$ qui peut appartenir \`a l'int\'erieur $\inte{\mathrm e}$ d'une ar\^ete $\mathrm e$ de $T$. 
L'application
$\widetilde{\psi}_{(T_1,g_1)} : 
\big( (T_2,g_2),x \big) \in \widetilde{U}_{(T_1,g_1)} \times \overline{B}_{g_1^{-1}.T_1}(e,1)  \mapsto (T_2,g_2.x) \in  \boldsymbol{\Tt}$
est injective en restriction aux ensembles
$\widetilde{U}_{(T_1,g_1)} \times B_{g_1^{-1}.T_1}(e,\frac{1}{2})$ et 
$\widetilde{U}_{(T_1,g_1)}  \times \inte{\mathrm e}$. Leurs images $\widetilde{V}_{(T_1,g_1)}$ et
$\widetilde{V}_{(T_1,g_1)}^{\mathrm e}$ sont munies de topologies telles que les restrictions et leurs inverses 
$\widetilde{\varphi}_{(T_1,g_1)}  : 
\widetilde{V}_{(T_1,g_1)} \to \widetilde{U}_{(T_1,g_1)} \times B_{g_1^{-1}.T_1}(e,\frac{1}{2})$ et
$\widetilde{\varphi}_{(T_1,g_1)} ^{\mathrm e}  :  \widetilde{V}_{(T_1,g_1)}^{\mathrm e} \to \widetilde{U}_{(T_1,g_1)}  \times \inte{\mathrm e}$
sont des hom\'eomorphismes. On munit $\boldsymbol{\Tt}$ de la topologie faible pour laquelle  $\widetilde{V}_{(T_1,g_1)}$ et $\widetilde{V}_{(T_1,g_1)}^{\mathrm e}$ forment un recouvrement ouvert fini. On v\'erifie ais\'ement que:

\noindent
i) l'espace $\Tt$ est r\'ealis\'e comme un sous-espace compact de $\widetilde{\TT}$,  

\noindent
ii) l'action de $G$ sur $\Tt$ s'\'etend en une action de $G$ sur  $\boldsymbol{\Tt}$,  

\noindent
iii) les cartes locales $\widetilde{\varphi}_{(T_1,g_1)}$ et $\widetilde{\varphi}_{(T_1,g_1)}^{\mathrm e}$ forment un atlas feuillet\'e sur $\boldsymbol{\Tt}$ qui d\'efinit un feuilletage par graphes $\widetilde{\F}$ invariant par l'action de $G$.

\noindent
Soit $\TT$  le quotient de
$\boldsymbol{\Tt}$ par l'action de $G$. Alors les applications 
$$
\psi_{g_1^{-1}.T_1} : (g_2^{-1}.T_2,x) \in U_{g_1^{-1}.T_1} \times \overline{B}_{g_1^{-1}.T_1}(e,1)  \mapsto x^{-1}.(g_2^{-1}.T_2) \in  \TT
$$
d\'efinissent des cartes locales 
$$ \varphi_{g_1^{-1}.T_1}  : V_{g_1^{-1}.T_1} \to U_{g_1^{-1}.T_1} \times B_{g_1^{-1}.T_1}(e,\frac{1}{2})
\hspace{1em} \mbox{et} \hspace{1em}
\varphi_{g_1^{-1}.T_1} ^{\mathrm e} :  V_{g_1^{-1}.T_1}^{\mathrm e} \to U_{g_1^{-1}.T_1}  \times \inte{\mathrm e}$$
et donc l'espace $\TT$ poss\`ede  un atlas feuillet\'e fini. Il est  compact car les plaques sont relativement compactes et les transversales sont compactes.  \end{proof}

\setcounter{thm}{0}

\subsection{Structure transverse}

Nous allons pr\'eciser ici la notion de  {\em  dynamique transverse} ({\em mesurable} ou {\em topologique}) utilis\'ee dans la introduction. D'abord, la relation d'\'equivalence $\R$ est d\'efinie par l'action d'un pseudogroupe de transformations $\Gamma$ engendr\'e par les translations $\tau_g : T \mapsto g^{-1}.T$ associ\'ees aux \'el\'ements de $G$. Chacune de ces applications est d\'efinie sur l'ouvert-ferm\'e $D_g = \{ T \in \T / g \in T\} $ de $\T$. Le th\'eor\`eme~\ref{th:TRG1} montre que $\Gamma$ est le {\em pseudogroupe d'holonomie de $\F$ r\'eduit \`a $\T$}. Nous utiliserons donc la notion de {\em dynamique transverse} introduite par A. Haefliger  \cite{haefliger85}. 
\medskip

N\'eanmoins, si l'holonomie est triviale, la dynamique transverse est repr\'esent\'ee par la relation d'\'equivalence induite sur toute transversale compl\`ete. Rappelons qu'une relation d'\'equivalence $\R$ sur un espace bor\'elien standard  $X$ est  {\em mesurable discr\`ete} si les classes d'\'equivalence sont d\'enombrables et si le graphe est un  bor\'elien de $X \times X$. On appelle {\em transformation partielle de $\R$} tout isomorphisme bor\'elien 
$\varphi : A \to B$ entre parties bor\'eliennes de $X$ dont le graphe
$G(\varphi) = \{  (x,y) \in X \times X /  y = \varphi(x)  \} \subset \R$. Une mesure bor\'elienne $\mu$ sur $X$ est dite {\em  invariante pour $\R$} si elle est invariante pour toute transformation partielle $\varphi$, i.e.  $ \mu(\varphi^{-1}(B')) = \mu (B')$
pour tout  bor\'elien $B' \subset B$. La relation d'\'equivalence $\R$ sur $\T = \T(G)$ est mesurable discr\`ete car les classes d'\'equivalence sont d\'enombrables et le graphe de $\R$ est un bor\'elien de $\T \times \T$ en tant que r\'eunion des graphes des transformations partielles $\tau_g$ d\'efinies sur les ouverts-ferm\'es $\overline{B}_\T(T,e^{-1})$ (avec  $g \in \overline{B}_{T}(e,1)$) et de leurs compositions. 

\begin{defn}\label{defdynmes}
Deux relations d'\'equivalence mesur\'ees $(\R,X,\mu)$ et $(\R',X',\mu')$ sont dites:
\medskip

\noindent
i) {\em orbitalement \'equivalentes} si $X$ et $X'$ contiennent des bor\'eliens $Y$ et $Y'$ satur\'es pour $\R$ et $\R'$ et de mesure totale pour lesquels il existe un isomorphisme bor\'elien $\varphi : Y \to Y'$ tel que   $\varphi(\R[x])=\R'[\varphi (x)]$ pour $\mu$-presque tout $x \in Y$ et  
 $f_\ast \mu  \sim \mu'$;
\medskip 

\noindent
ii) {\em stablement orbitalement \'equivalentes} si $X$ et $X'$ contiennent des bor\'eliens $Y$ et $Y'$ dont les satur\'es pour $\R$ et $\R'$ sont de mesure totale tels que les relations d'\'equivalence induites $\R \! \mid_Y$ et $\R' \! \mid_{Y'}$ sont orbitalement \'equivalentes. Nous dirons alors que $\R$ et $\R'$  repr\'esentent une m\^eme {\em dynamique mesurable}.
\end{defn}

Toute relation d'equivalence $\R$ sur un espace bor\'elien ou topologique $X$  est munie d'une structure naturelle de groupo\"{\i}de caract\'eris\'ee par les donn\'ees suivantes:
l'inclusion $\varepsilon : x \in X \mapsto (x,x) \in \R$ de l'espace des unit\'es $X$ dans $\R$, 
les projections $\beta : (x,y) \in \R \mapsto x \in X$ et $\alpha : (x,y) \in \R \mapsto y  \in X$,  l'ensemble des couples composables 
$\R \ast \R = \{  ( (x,y) , (x',y') ) \in \R \times \R  /  \alpha(x,y) = y = x' = \beta(x',y')  \}$,  la multiplication partielle $\mu :   ( (x,y) , (x',y') ) \in \R \ast \R  \mapsto (x,y') \in \R$ et  l'inversion 
$\iota : (x,y) \in \R \to (y,x) \in \R$.
La relation d'equivalence $\R$ est dite  {\em topologique} si elle l'est comme groupo\"{\i}de, c'est-\`a-dire si le graphe de $\R$ est muni d'une topologie (qui en fait un espace localement compact s\'epar\'e) telle que  $\alpha, \beta : \R \to X$ et $\mu: \R \ast \R \to \R$ sont continues et  $\iota : \R \to \R$ est un hom\'eomorphisme. Une telle relation d'\'equivalence est  dite {\em $\beta$-discr\`ete} si  $X$ est ouvert dans $\R$. Pour tout  ouvert $U$ de $\T$ et tout \'el\'ement $g$ de $G$, notons
$O(U,g) = \{ (T,g^{-1}.T) \in \R / T\in U \cap D_g \}$ 
le graphe de la translation $\tau_g$ restreinte \`a $U$. Les ensembles $O(U,g)$ engendrent une topologie sur $\R$, plus fine que celle induite par la topologie produit sur $\T \times \T$, qui en fait une relation d'\'equivalence  topologique $\beta$-discr\`ete.
  
 \begin{defn} \label{defdyntop}
Deux  relations d'\'equivalence $\boldsymbol\beta$-discr\`etes $\R$ et $\R'$ sur $X$ et $X'$ sont dites {\em stablement  orbitalement \'equivalentes} (resp. {\em isomorphes}) si $X$ et $X'$ contiennent des ouverts $Y$ et $Y'$ qui rencontrent toutes les classes d'\'equivalence de $\R$ et $\R'$ tels que les relations d'\'equivalence induites $\R \! \mid_Y$ et $\R' \! \mid_{Y'}$ sont orbitalement \'equivalentes (resp. isomorphes). \end{defn}

\setcounter{thm}{0}

\subsection{R\'ealisation g\'eom\'etrique}

La donn\'ee d'un syst\`eme fini de g\'en\'erateurs $S$ de $G$ fournit un syst\`eme fini de g\'en\'erateurs $\Sigma = \{ \tau_g / g \in S \}$ de $\Gamma$. Par analogie avec l'action d'un groupe, l'orbite  $\Gamma(T) = \R [T]$ est  l'ensemble des sommets d'un graphe $\overline{\Gamma} (T) = \overline{\R}[T]$, muni de la distance $d_\Sigma$ d\'efinie par la longueur des $\Sigma$-mots. Nous dirons alors que $(\R,\T,\Sigma)$ est une {\em relation d'\'equivalence graph\'ee} et que $(\Gamma,\T,\Sigma)$ un {\em pseudogroupe graph\'e}. Dans \cite{alvaro}, le deuxi\`eme auteur  a prouv\'e l'extension suivante du th\'eor\`eme~\ref{th:TRG1}: 

\begin{thrg} \label{th:TRG2}
Soit $\Gamma$ un pseudogroupe de g\'en\'e\-ration compacte agissant sur  un espace localement compact, m\'etrisable et s\'eparable $X$ de dimension $0$. Alors il existe lamination compacte par surfaces de Riemann $(\M,\L)$ dont le pseudogroupe de holonomie est \'equivalent  \`a $\Gamma$.
\end{thrg} 

D'une part, si $Y$ est un ouvert et ferm\'e de $X$ qui rencontre toutes les feuilles et si $\Sigma$ est un syst\`eme de g\'en\'eration compacte pour \mbox{$\Gamma \! \mid_Y$}, alors la fonction de valence 
$val : Y \to \mathbb{N}$ est continue. Il existe donc un espace compact feuillet\'e par graphes $(\Y,\F)$ tel que $Y$ est un ferm\'e qui rencontre toute les feuilles de $\F$ et \mbox{$\Gamma \! \mid_Y$} est le pseudogroupe de holonomie de $\F$ r\'eduit \`a $X$. D'autre part, si $(\Y,\F)$ un espace compact feuillet\'e par graphes  transversalement model\'e par un espace localement compact, m\'etrisable et s\'eparable de dimension $0$, il existe une lamination compacte par surfaces de Riemann $(\M,\L)$ telle que les pseudogroupes de holonomie de $\F$ et $\L$ r\'eduits \`a l'ensemble de sommets $Y$ sont \'egaux. En fait, comme nous l'ont fait remarquer B. Deroin et G. Hector, ce th\'eor\`eme d'\'epaisissement reste valable en dimension topologique finie quelconque. 

\subsection{Graphes r\'ep\'etitifs et ensembles minimaux}

Le but de ce paragraphe est de caract\'eriser les ensembles minimaux de $(\TT,\F)$ et $(\M,\L)$ en adaptant  la {\em propri\'et\'e d'isomorphisme local} des pavages \cite{bbg,rw}.
 
\begin{defn} \label{def:repetitif} 
i) Fixons un couple $T,T' \in \T$. Nous dirons qu {\em $T'$ contient une copie fid\`ele  de la boule $B_T(x,r)$} et nous \'ecrirons 
$B_T(x,r) \hookrightarrow T'$ s'il existe  $g \in G$ tel que 
$g.B_T(x,r) = B_{T'}(g.x,r) \subset T'$.
\medskip 

\noindent
ii) Nous dirons qu'un graphe $T \in \T$ est  \textit{r\'ep\'etitif} si pour tout entier
 $r>0$, il existe un entier
$R >0$ tel que $B_T(x,r) \hookrightarrow B_T(y,R)$
pour tout couple $x,y\in T$.
\end{defn}
\smallskip

\noindent
Nous adaptons ici une version uniforme de la propri\'et\'e d'isomorphisme local usuelle.  En fait, pour les pavages de type fini, les deux propri\'et\'es sont \'equivalentes. L'analo\-gue pour les graphes fait partie du crit\`ere de minimalit\'e suivant (dont l'\'equivalence $(ii) \Leftrightarrow (iii)$ a \'et\'e prouv\'ee dans \cite{B01,ghys2}): 

\begin{thm}
\label{repetitif<=>minimal} Pour tout  $T \in \T$, consid\'erons l'ensemble ferm\'e $X=\overline{\R[T]}$ satur\'e pour $\R$. Les conditions suivantes sont \'equivalentes: 
\medskip

\noindent i) le graphe $T$ est  r\'ep\'etitif;
\medskip

\noindent ii) pour tout 
 $r>0$, il existe  $R>0$ tel que $B_T(e,r)
\hookrightarrow B_T(y,R)$ pour tout
 $y\in T$;
\medskip

\noindent iii) l'ensemble $X$ est  minimal.
\end{thm}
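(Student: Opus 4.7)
My plan is to prove the three conditions equivalent via the cycle $(i)\Rightarrow(ii)\Rightarrow(iii)\Rightarrow(i)$; the first implication is immediate by specialising $x=e$ in the definition of repetitivity. The equivalence $(ii)\Leftrightarrow(iii)$ is announced as already known, but I sketch both directions for completeness, the genuinely new ingredient being the passage from the non-uniform statement (ii) to its uniform form (i).

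For $(ii)\Rightarrow(iii)$, I will show that $T$ itself lies in $\overline{\R[T']}$ for every $T'\in X$; since $\overline{\R[T']}$ is saturated and closed, it then contains $\overline{\R[T]}=X$ and hence equals $X$, which is minimality. Given $r>0$, I take $R$ provided by (ii) and exploit $T'\in\overline{\R[T]}$ to find $h\in T$ with $B_{h^{-1}T}(e,R+r)=B_{T'}(e,R+r)$. Applying (ii) at $y=h$ produces $g=hk\in T$ with $d_S(e,k)\leq R$ and $B_T(g,r)=g.B_T(e,r)$. Translating by $h^{-1}$ yields $B_{h^{-1}T}(k,r)=k.B_T(e,r)$, and the coincidence of $h^{-1}T$ with $T'$ on the ball of radius $R+r$ transfers this equality to $B_{T'}(k,r)=k.B_T(e,r)$. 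Thus $k\in T'$, $k^{-1}.T'\in\R[T']$, and $d(k^{-1}.T',T)\leq e^{-r}$, as required.

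For $(iii)\Rightarrow(i)$, the strategy is a compactness argument on $X$. Fix $r>0$. Since $\overline{B}_\G(e,r)$ is finite, the map $T'\mapsto B_{T'}(e,r)$ takes only finitely many ``types'' $\alpha_1,\dots,\alpha_n$ on $X$, and each fibre $U_{\alpha_j}=\{T'\in X:B_{T'}(e,r)=\alpha_j\}$ is clopen and non-empty. By minimality, every orbit $\R[T']$ meets each $U_{\alpha_j}$, so the open sets
\[V_{R,j}=\{T'\in X : \exists\, g\in T' \text{ with } d_S(e,g)\leq R \text{ and } g^{-1}.T'\in U_{\alpha_j}\}\]
form an increasing exhaustion of $X$ as $R\to\infty$. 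Compactness furnishes $R_j$ with $V_{R_j,j}=X$, and I set $R:=\max_j R_j$. Applied to $T_y:=y^{-1}.T\in X$ for arbitrary $y\in T$ and to the type $\alpha_j=B_{x^{-1}T}(e,r)$ (with $x\in T$ also arbitrary), this yields $g\in T_y$ with $d_S(e,g)\leq R$ such that $(yg)^{-1}.T$ and $x^{-1}.T$ coincide on the ball of radius $r$ around $e$. Unwinding the identifications gives $(ygx^{-1}).B_T(x,r)=B_T(yg,r)\subset B_T(y,R+r)$, which is property (i) with $R$ replaced by $R+r$.

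The main obstacle I anticipate is precisely the bookkeeping in $(iii)\Rightarrow(i)$: carefully unwinding the pointed-graph identifications $T\leftrightarrow T_x\leftrightarrow T_y$ via left translations, and verifying that the uniform radius produced by compactness of the ambient space $X$ does translate into a single radius inside the fixed graph $T$ that works for all pairs $x,y$ simultaneously.
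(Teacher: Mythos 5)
Your argument follows essentially the same route as the paper's: the only implication requiring real work is $(iii)\Rightarrow(i)$, and you obtain the uniformity in $x$ exactly as the paper does, by combining the compactness of $X$ with the finiteness of the set of translation classes of balls of radius $r$ (your types $\alpha_j$ are the paper's representative balls $B_T(x_i,r)$, and your sets $V_{R,j}$ play the role of its exhaustions $U_R^{x_i}$). You additionally write out $(i)\Rightarrow(ii)$ and $(ii)\Rightarrow(iii)$, which the paper dismisses as trivial or cites from Blanc and Ghys; those parts are correct.

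One detail to repair in $(iii)\Rightarrow(i)$: in the definition of $V_{R,j}$ you bound $d_S(e,g)$, the word metric of $G$, but the concluding inclusion $B_T(yg,r)\subset B_T(y,R+r)$ involves path-metric balls of the graph $T$ and therefore requires $d_T(y,yg)=d_{y^{-1}.T}(e,g)\leq R$. Since the path metric of a subgraph dominates the word metric of $G$ --- two vertices of a subtree of $\gZ^2$ that are adjacent in $\Z^2$ can be arbitrarily far apart in the tree --- the bound on $d_S(e,g)$ gives no control on $d_T(y,yg)$, and the step can fail as written. The fix is immediate: replace the condition $d_S(e,g)\leq R$ by $g\in B_{T'}(e,R)$ (path-metric ball of $T'$); the sets $V_{R,j}$ remain open, still exhaust $X$ as $R\to\infty$ because the graphs are connected, and the rest of the argument goes through unchanged.
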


\begin{proof} Il suffit de prouver  $(iii) \Rightarrow (i)$, mais il convient avant de rappeler bri\`eve\-ment
$(iii) \Rightarrow (ii)$. Pour cela,  \`a tout r\'eel $r>0$, on lui associe une suite croissante d'ouverts
$U_R=\{ \ T'\in X \ / \ B_T(e,r)\hookrightarrow B_{T'}(e,R) \ \}$ (avec $R \geq 1$) qui recouvrent $X$. 
Puisque $X$ est compact, il existe $R>0$ tel que $X=U_R$. Pour tout  $x \in T$, le graphe $x^{-1}.T \in U_R$ et donc $B_T(e,r)\hookrightarrow B_{x^{-1}.T}(e,R)$, 
c'est-\`a-dire qu'il existe $g \in G$ tel que:
$g.B_T(e,r)=B_{x^{-1}.T}(g,r)\subset B_{x^{-1}.T}(e,R)$. 
Alors on a:
$$
h.B_T(e,r)= x.B_{x^{-1}.T}(g,r) =  B_{T}(h,r) \subset x.B_{x^{-1}.T}(e,R)=
B_{T}(x,R)
$$
avec $h = xg$ et $B_T(e,r)\hookrightarrow B_T(x,R)$. Pour d\'emontrer $(iii) \Rightarrow (i)$, fixons un r\'eel $r>0$ et un point $x\in T$. Comme auparavant, l'ensemble $X$ est recouvert par une suite croissante d'ouverts  $U_R^x=\{T'\in X / B_T(x,r)\hookrightarrow B_{T'}(e,R)\}$ et  il existe $R>0$, qui d\'epend de $r$ et $x$, tel que $B_T(x,r)\hookrightarrow B_T(y,R)$ pour tout $y\in T$. Pour conclure, il faut pouvoir choisir $R > 0$ ind\'ependant du point  $x$. Remarquons tout d'abord que pour tout sommet $g$ de $\G$, la boule $B_{\G}(g,r) = g.B_{\G}(e,r)$. Rappelons aussi que la compacit\'e $\T$ provient du fait que $B_{\G}(e,r)$ ne contient qu'un nombre fini de sous-graphes. Il en est de m\^eme pour  $B_{\G}(g,r)$. En fait, \`a translation pr\`es, il n'y a qu'un nombre fini de boules de rayon $r >0$ distinctes. Consid\'erons une famille finie de points $x_1,\dots,x_n \in T$ de mani\`ere que les boules $B_T(x_i,r)$ repr\'esentent toutes les classes de translations possibles. Pour tout  
$1\leq i \leq n$ et tout $y\in T$, on a
$B_T(x_i,r) \hookrightarrow B_T(y,R(r,x_i))$.
Si on pose $R=\max\{R(r,x_1),\dots,R(r,x_n)\}$, alors $B_T(x,r)  \hookrightarrow B_T(y,R)$
pour tout couple $x,y \in T$. \end{proof}

\setcounter{thm}{0}

\section{L'espace feuillet\'e de Ghys--Kenyon} \label{GK}

Nous donnons ici une nouvelle construction de l'{\em espace
feuillet\'e de Ghys-Kenyon} \cite{ghys2},  que n'utilise pas  l'{\em arbre de Kenyon} \cite{ghys2,kenyon},  mais ses r\`egles de construction. 

\subsection{L'arbre de Kenyon et l'espace 
feuillet\'e de Ghys--Kenyon}

Nous allons commencer par rappeler la construction de l'arbre de Kenyon. Soient $\gZ^2$ le graphe de Cayley  de $\Z^2$, muni du syst\`eme de g\'en\'erateurs $\{(\pm 1,0),(0,\pm 1)\}$, et  $T_1$ le sous-arbre de $\gZ^2$ d\'ecrit dans la figure 3. 
$$
\hspace*{0.2in} 
\begin{minipage}[t]{0.9in}
\includegraphics[width=0.7in]{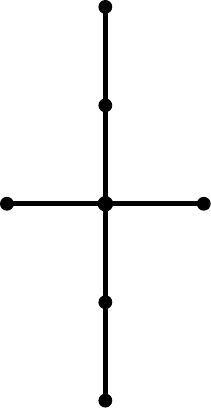}
\end{minipage}
\hspace{0.4in}
\begin{minipage}[t]{1.5in}
\includegraphics[width=1.45in]{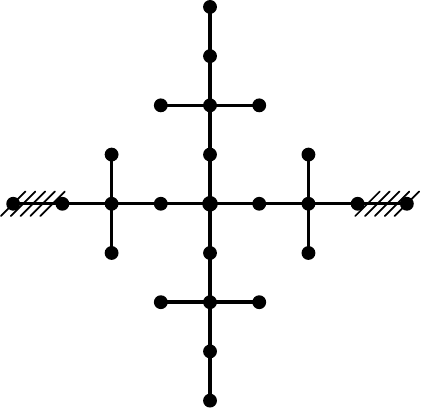}
\end{minipage}
\hspace{0.4in} 
\begin{minipage}[t]{1.5in}
\includegraphics[width=1.45in]{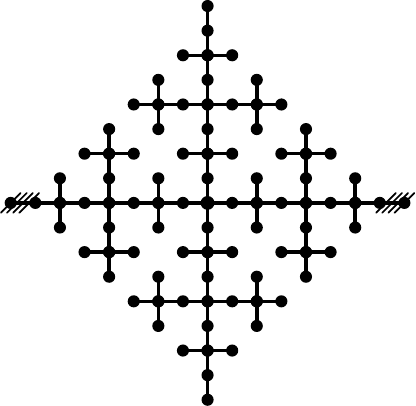} 
\end{minipage}
\setcounter{figure}{3}
$$
\begin{center}
Figure 3: Les arbres $T_1$, $T_2$ et $T_3$  \label{figT1}
\end{center}
\noindent
Cet arbre est translat\'e ensuite par le vecteur $(0,2)$, puis l'image 
est tourn\'ee \`a l'aide des rotations d'angle $\frac{\pi}{2}$, $\pi$ et $\frac{3\pi}{2}$. L'\'elagage des ar\^etes terminales contenues dans l'axe horizontal fournit un arbre $T_2$. Si on r\'ep\`ete ce proc\'ed\'e, on obtient de m\^eme un arbre $T_3$.
Par r\'ecurrence, on obtient une suite d'arbres $T_n$ qui rencontrent les axes horizontal et vertical suivant les intervalles  $[-2^n+1,2^n-1]\times\{0\}$ et  $\{0\}\times [-2^n,2^n]$ 
respectivement. Nous appellerons  \textit{arbre de Kenyon} la r\'eunion 
$T_\infty=\bigcup_{n \geq 1} T_n \subset \gZ^2$. C'est un arbre ap\'eriodique et r\'ep\'etitif ayant $4$ bouts.  
\medskip

Nous appellerons  \textit{minimal de Ghys-Kenyon}  l'ensemble  $X=\overline{\R[T_\infty]}$.  D'apr\`es le th\'eor\`eme~\ref{th:TRG1},  il existe un  feuilletage par graphes $\F$ d'un espace compact $\X$ pour lequel  $X$  est une transversale compl\`ete et $\R$ est la relation d'\'equivalence induite par $\F$. Nous appellerons \textit{espace feuillet\'e de Ghys-Kenyon} ce minimal de l'espace feuillet\'e de Gromov-Hausdorff $(\TT,\F)$. En fait, d'apr\`es le th\'eor\`eme~\ref{th:TRG2}, on peut remplacer $(\X,\F)$ par une vraie lamination par surfaces de Riemann $(\M,\L)$, appe\-l\'ee \textit{lamination de Ghys-Kenyon}. 

\setcounter{thm}{0}

\subsection{Codage des feuilles}

Nous allons reconstruire le minimal de Ghys--Kenyon \`a l'aide d'une application $\Phi:\Suc_4\rightarrow X$ qui,  \`a toute suite $\alpha=\alpha_0\alpha_1 \dots  \in \Suc_4=\{0,1,2,3\}^{\N}=\Z_4^{\N}$, associe un arbre
ap\'eriodique et r\'ep\'etitif $\Phi(\alpha)$ dans l'enveloppe de $T_\infty$. Nous cons\-truirons $\Phi(\alpha)$ de proche en proche en partant du sommet $x_0=0$ et de l'arbre trivial $P_0=\{0\}$. Pour cela, nous commen\c{c}ons par identifier les \'el\'ements de $\Z_4$ avec les racines quatri\`emes de l'unit\'e gr\^ace \`a l'application 
$\vec{r}: \Z_4\rightarrow \C$ d\'efinie par $\vec{r}(k) = e^{\frac{\pi}{2} i k}$. Nous joignons  les sommets $x_0$ et $x_1 = \vec{r}(\alpha_0)$ par une ar\^ete de $\gZ^2$, puis nous prenons la r\'eunion des images de cette ar\^ete par les rotations de centre $x_1$ et d'angle $\frac{\pi}{2}$, $\pi$ et $\frac{3\pi}{2}$. Nous obtenons ainsi un  arbre $P_1 = \Phi(\alpha_0)$. Consid\'erons ensuite l'unique ar\^ete de $\gZ^2$ qui joint  le sommet  $x_2=x_1+2\vec{r}(\alpha_1)$ avec un sommet  de $P_1$.  Nous appelons $P_2 = \Phi(\alpha_0\alpha_1)$ la r\'eunion de l'arbre $P_1$ et  leurs images par les  rotations de centre $x_1$ et d'angle $\frac{\pi}{2}$, $\pi$ et $\frac{3\pi}{2}$.  Par r\'ecurrence, nous avons une suite de sommets $x_n = x_{n-1} + 2^{n-1} \vec{r}(\alpha_{n-1}) = \sum_{i=0}^{n-1}
2^i \vec{r}(\alpha_i)$
et une suite croissante de sous-arbres finis $P_n$ de $\gZ^2$. Alors  $\Phi(\alpha) = \bigcup_{n\geq 0} P_n =  \bigcup_{n\geq 0}
\Phi(\alpha_0 \dots \alpha_{n-1})$ est un arbre
ap\'eriodique et r\'ep\'etitif ayant au plus $2$ bouts.
Nous  appellerons  \textit{squelette de} $\Phi(\alpha)$ la suite
de sommets $x_0x_1\dots x_n\dots$ identifi\'ee au chemin d'ar\^etes obtenu en joignant les  sommets $x_n$ et $x_{n+1}$ par  $2^n$ ar\^etes dans la direction $\vec{r}(\alpha_i)$. Nous venons de d\'efinir une application 
$\Phi : \Suc_4  \to \T$. 
 
\begin{figure}
\begin{center}
\includegraphics[width=3.6in]{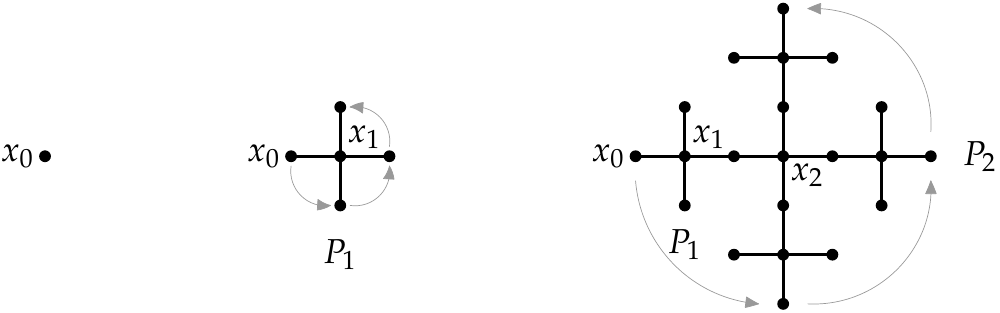}
\vspace{-2ex}
\end{center}
\caption{Construction de l'arbre $\Phi(\alpha)$}  \label{phialpha}
\end{figure}

\begin{prop} \label{thX}
Le minimal de Ghys-Kenyon $X$ est l'enveloppe $\overline{\R[\Phi(\alpha)]}$ de tout arbre cod\'e $\Phi(\alpha)$. Il se d\'ecompose en la r\'eunion disjointe de la classe 
$\R[T_\infty]$ et  de l'ensemble satur\'e $\bigcup_{\alpha\in \Suc_4} \R[\Phi(\alpha)]$.
\end{prop}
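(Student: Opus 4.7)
Le plan consiste à établir d'abord l'égalité $X=\overline{\R[\Phi(\alpha)]}$ par un argument de minimalité, puis à caractériser précisément les arbres de $X$.

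\medskip

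Pour la première assertion, il suffit de vérifier que $\Phi(\alpha)$ appartient à $X$ pour toute suite $\alpha\in\Suc_4$: puisque $T_\infty$ est répétitif, le théorème~\ref{repetitif<=>minimal} assure que $X$ est minimal, et alors $\overline{\R[\Phi(\alpha)]}$ est un fermé saturé non vide inclus dans $X$, donc égal à $X$. Pour voir que $\Phi(\alpha) \in X = \overline{\R[T_\infty]}$, nous montrons qu'à tout rayon $N>0$, la boule $B_{\Phi(\alpha)}(0,N)$ est isométrique, par une translation convenable, à une boule $B_{T_\infty}(g,N)$ centrée en un sommet $g\in T_\infty$. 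L'idée clé est que la construction inductive des approximants $P_n$ de $\Phi(\alpha)$ reproduit exactement le procédé de translations par $2^{n-1}$ et de symétrisations par rotations d'angle multiple de $\pi/2$ qui définit la suite $(T_m)$; par récurrence sur $n$, chaque $P_n$ apparaît (à translation près) comme sous-arbre de $T_m$ pour $m$ assez grand, ce qui fournit les boules voulues.

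\medskip

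Pour la décomposition, la disjonction $\R[T_\infty]\cap\bigcup_\alpha\R[\Phi(\alpha)]=\emptyset$ résulte du fait que $T_\infty$ possède quatre bouts tandis que chaque $\Phi(\alpha)$ en a au plus deux, le nombre de bouts étant un invariant par translation. Pour l'inclusion $X\subset\R[T_\infty]\cup\bigcup_\alpha\R[\Phi(\alpha)]$, considérons $T\in X$ et écrivons $T=\lim g_n^{-1}.T_\infty$ avec $g_n\in T_\infty$. Si la suite $(g_n)$ admet une sous-suite bornée, la discrétude de $\Z^2$ fournit une sous-suite constante et $T$ est un translaté de $T_\infty$. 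Sinon, $g_n$ s'échappe à l'infini dans $T_\infty$, et l'on exploite sa structure hiérarchique: à chaque échelle $k$, le passage de $T_{k-1}$ à $T_k$ place quatre copies symétriques de l'arbre précédent dans les quatre directions cardinales, et par extractions diagonales on peut supposer qu'à chaque $k$ la position de $g_n$ se stabilise dans l'une de ces quatre copies. La suite des choix ainsi obtenue définit un élément $\alpha\in\Suc_4$, et l'on vérifie que $T$ coïncide avec un translaté de $\Phi(\alpha)$.

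\medskip

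L'obstacle principal se situe dans cette dernière étape, qui demande une analyse fine: il faut organiser les extractions de manière cohérente (procédé diagonal dans le compact $\Suc_4=\{0,1,2,3\}^\N$) et montrer que la lecture échelle par échelle des cadrans occupés par $g_n$ dans la décomposition auto-similaire de $T_\infty$ produit effectivement la suite $\alpha$ pour laquelle $\Phi(\alpha)$ coïncide avec la limite $T$. Autrement dit, il faut relier le codage extrait de la dynamique asymptotique de $(g_n)$ dans $T_\infty$ à la suite qui engendre géométriquement $\Phi(\alpha)$ via les sommets $x_n=\sum_{i=0}^{n-1}2^i\vec{r}(\alpha_i)$.
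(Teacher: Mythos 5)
Your treatment of the first assertion is correct but runs in the mirror direction of the paper's: the paper shows $T_\infty\in\overline{\R[\Phi(\alpha)]}$ directly from the identity $B_{T_\infty}(0,2^n-1)=B_{\Phi(\alpha)-x_n}(0,2^n-1)$ at the skeleton vertices and then invokes minimality of $\overline{\R[\Phi(\alpha)]}$ (repetitivity of $\Phi(\alpha)$), whereas you show $\Phi(\alpha)\in X$ and invoke minimality of $X$. Both routes are legitimate, but yours carries an extra burden that your sketch underestimates: it is not enough that each $P_n$ occurs as a \emph{subtree} of some $T_m$ up to translation --- you need $B_{\Phi(\alpha)}(0,N)$ to be a \emph{copie fid\`ele} of a ball of $T_\infty$, i.e.\ to coincide exactly with $B_{T_\infty}(g,N)$ after translation, with no extra edges of $T_\infty$ within distance $N$ of $g$. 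The paper's direction avoids this verification altogether.

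The genuine flaw is in your last step. The dichotomy ``$(g_n)$ bounded $\Rightarrow T\in\R[T_\infty]$, $(g_n)$ unbounded $\Rightarrow T$ is a translate of some $\Phi(\alpha)$'' is false in its second branch. Since $T_\infty$ is repetitive, for every $n$ there is a vertex $g_n$ with $|g_n|>n$ and $B_{T_\infty}(g_n,n)=B_{T_\infty}(0,n)+g_n$; hence $T_\infty-g_n\to T_\infty$ with $(g_n)$ escaping to infinity. (This is unavoidable: $X$ is minimal and infinite, so the orbit $\R[T_\infty]$ must accumulate on $T_\infty$ itself, and by aperiodicity the corresponding translating vertices cannot stay bounded.) For such a sequence your diagonal extraction of quadrant-addresses would still output some $\alpha\in\Suc_4$ and conclude that the limit is a translate of $\Phi(\alpha)$ --- contradicting the disjointness you yourself establish via the number of ends. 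The scale-by-scale address of $g_n$ simply does not detect the relevant alternative, namely whether around the limit point all four branches keep growing at every scale (limit in $\R[T_\infty]$) or whether from some scale on the growth continues in only one or two directions (limit of type $\Phi(\alpha)$). The paper's proof works \emph{intrinsically} on the limit tree $T$: assuming $T\notin\R[T_\infty]$, translate so that $0$ has valence $1$, then read off $\alpha$ from $T$ via the spheres $S_T(x_n,2^n)$, which contain one or two points; in the two-point case exactly one point is excluded because the ball of radius $2^{n+2}-1$ around it reproduces $B_{T_\infty}(0,2^{n+2}-1)$, and the other point is $x_{n+1}$. To salvage your extrinsic approach you would have to add precisely this kind of analysis (e.g.\ show that the position of $g_n$ relative to the distinguished centre of its smallest enclosing copy stays bounded at every scale exactly when $T\notin\R[T_\infty]$), at which point you are redoing the paper's argument.
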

\begin{proof} V\'erifions d'abord que $X=\overline{\R[\Phi(\alpha)]}$ pour toute suite
$\alpha\in \Suc_4$. En effet, $T_\infty \in \overline{\R[\Phi(\alpha)]}$  car 
\mbox{$B_{T_\infty}(0,2^n-1)=B_{\Phi(\alpha)}(x_n,2^n-1) - x_n =
B_{\Phi(\alpha)- x_n}(0,2^n-1)$.} Donc $X=\overline{\R[T_\infty]}\subset\overline{\R[\Phi(\alpha)]}$. Mais puisque $\Phi(\alpha)$ est r\'ep\'etitif, on a l'\'egalit\'e. Pour montrer la deuxi\`eme affirmation, on constante  que les arbres $T_\infty$  et $\Phi(\alpha)$ sont  distincts  car ils n'ont pas le m\^eme nombre de bouts. Leurs classes d'\'equivalence $\R[T_\infty]$ et $\R[\Phi(\alpha)]$ le sont  aussi.
Il faut  v\'erifier que tout arbre $T\in X-\R[T_\infty]$ est \'equi\-valent \`a un arbre  $\Phi(\alpha)$. En rempla\c{c}ant  $T$ par un translat\'e $T-v$, nous pourrons supposer que $val(T) = val_T(0) = 1$. Nous construirons alors de proche en proche une suite $\alpha\in \Suc_4$ telle que $T = \Phi(\alpha)$. Psar hypoth\`ese, la sph\`ere 
$S_T(0,1)= \partial \overline{B}_T(0,1)$ 
est  r\'eduite \`a un  point $x_1$ et 
$\alpha_0={\vec{r}}^{-1}(x_1)$. Supposons connus  les codes $\alpha_0\dots\alpha_n$ 
et les points $x_0\dots x_n$ du squelette.  Alors la
sph\`ere $S_T(x_n,2^n) =  \partial
\overline{B}_T(x_n,2^n)$ v\'erifie l'une des deux conditions suivantes:

\noindent
i)  $S_T(x_n,2^n)$ est r\'eduite \`a un seul point 
$x_{n+1}=x_n+2^nv$ o\`u $v\in \Z^4$. Dans ce cas, nous d\'efinirons $\alpha_n=\vec{r}^{-1}(v)$.

\noindent
ii) $S_T(x_n,2^n)$ contient deux points 
$x_{n+1}^0$ et $x_{n+1}^1$. Dans ce cas, il existe $i \in \{ 0,1\}$ tel que $B_T(x_{n+1}^i,2^{n+2}-1)=B_{T_\infty}(0,2^{n+2}-1)$ (voir la figure~5) et nous d\'efinirons
$x_{n+1}=x_{n+1}^{1-i}=x_n+2^nv$ et $\alpha_n={\vec{r}}^{-1}(v)$ avec $v\in \Z^4$.

$$ \vspace*{-3.25cm}
\includegraphics[width=1.3in]{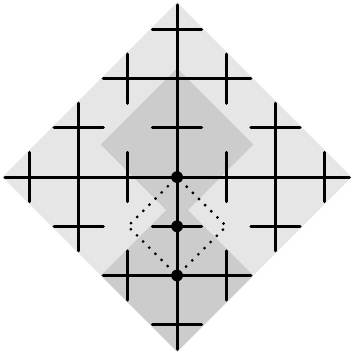}
$$ 
\[
\xy (0,0)*{}; (36,19)*{\scriptstyle x^i_{n+1}};
(22,0)*{\scriptstyle x_n}; (31,-10)*{\scriptstyle x^{1-i}_{n+1}};
{\ar (32,18)*{}; (21,8)*{}}; {\ar (27,-9)*{}; (21,-4)*{}};
 \endxy
\]

\begin{center}
\noindent Figure 5: Le $(n+1)$-i\`eme code \label{figcode}
\end{center}
\setcounter{figure}{5}

\noindent

\noindent
Par r\'ecurrence, nous aurons une suite $\alpha \in \Suc_4$ telle que $T =\Phi(\alpha)$.
\end{proof}








Consid\'erons l'ouvert-ferm\'e 
$X^{\leq 2} = \{  T \in X  / \ val(T) \leq 2 \}$,
 le $G_\delta$ dense
$Y = X - \R[T_\infty] = \bigcup_{\alpha\in \Suc_4} \R[\Phi(\alpha)]$ et 
le bor\'elien $Y^{\leq 2} = Y  \cap X^{\leq 2}$. D'apr\`es la preuve de la proposition ci-dessus, l'application de codage $\Phi : \Suc_4 \to Y^{\leq 2}$ est surjective. 

\setcounter{thm}{0}

\subsection{Relation cofinale}  

Si on munit $\Suc_4 =   \Z_4^\N$ de la topologie produit, engendr\'ee par les cylindres 
$C_{\beta_{0}\dots\beta_{n}}^{i_0\dots i_n}
=  \{\alpha \in \Suc_4 / \alpha_{i_0}=\beta_0,\dots, \alpha_{i_n}=\beta_n \}$, $\Suc_4$ est hom\'eomorphe  \`a l'ensemble de Cantor. Soit 
$\sigma : \Suc_4 \to \Suc_4$ le d\'eplacement de Bernoulli donn\'e par $\sigma(\alpha)_n = \alpha_{n+1}$ pour toute suite
$\alpha \in \Suc_4$ et tout entier $n \geq 0$.  Deux suites $\alpha$ et $\beta$ dans $\Suc_4$ sont {\em cofinales} s'il existe $n \geq 0$ tel que $\sigma^n(\alpha)=\sigma^n(\beta)$, c'est-\`a-dire 
$\alpha_m=\beta_m$ pour tout  $m \geq n$. 
Pour tout couple de suites finies $\alpha_0\dots\alpha_n$ et $\beta_0\dots\beta_n$, les arbres finis $\Phi(\alpha_0\dots\alpha_n)$ et  $\Phi(\beta_0\dots\beta_n)$ sont reli\'es par $\Phi(\beta_0\dots\beta_n) = \Phi(\alpha_0\dots\alpha_n) - v$ 
o\`u le vecteur
$ v = \sum_{i=0}^{n} 2^i \vec{r}(\alpha_i) -
             \sum_{i=0}^{n} 2^i \vec{r}(\beta_i)
          = \sum_{i=0}^{n} 2^i \big( \vec{r}(\alpha_i) -
             \vec{r}(\beta_i)\big)$.
Un argument simple montre alors que:

\begin{prop}  \label{Phicompatible} Deux arbres cod\'es $\Phi(\alpha)$ et $\Phi(\beta)$ sont 
$\R$-\'equivalents si et seulement si les suites $\alpha$ et $\beta$
sont cofinales.
\end{prop}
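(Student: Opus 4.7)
Mon plan est le suivant. J'introduis les notations $x_n^\gamma = \sum_{i=0}^{n-1} 2^i \vec{r}(\gamma_i)$ pour le $n$-ième sommet du squelette de $\Phi(\gamma)$ et $P_n^\gamma = \Phi(\gamma_0\dots\gamma_{n-1})$ pour l'arbre fini correspondant~; d'après la preuve de la proposition~\ref{thX}, $P_n^\gamma = B_{\Phi(\gamma)}(x_n^\gamma, 2^n-1) = T_n' + x_n^\gamma$ où $T_n' := B_{T_\infty}(0, 2^n-1)$ est la boule de Kenyon de rayon $2^n-1$.

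Pour l'implication $(\Leftarrow)$, je supposerai $\alpha_m = \beta_m$ pour tout $m \geq N$ et poserai $v = x_N^\alpha - x_N^\beta$. La formule pour les préfixes finis énoncée juste avant la proposition donne $P_N^\beta = P_N^\alpha - v$. Par récurrence sur $n \geq N$, j'établirai que $x_n^\beta = x_n^\alpha - v$ (les incréments $2^n\vec{r}(\alpha_n) = 2^n\vec{r}(\beta_n)$ coïncidant) et que $P_n^\beta = P_n^\alpha - v$ (les règles de construction au rang $n+1$, consistant à ajouter une nouvelle arête et ses trois rotations autour de $x_{n+1}$, étant identiques modulo translation par $v$). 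Le passage à la limite donne $\Phi(\beta) = \Phi(\alpha) - v$, et puisque $0 \in \Phi(\beta)$ on a $v \in \Phi(\alpha)$, ce qui prouve la $\R$-équivalence.

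Pour la réciproque $(\Rightarrow)$, je supposerai $\Phi(\beta) = \Phi(\alpha) - v$ avec $v \in \Phi(\alpha)$. Le sommet $0$ étant de valence $1$ dans chacun des deux arbres, $v$ est nécessairement une feuille de $\Phi(\alpha)$, donc $v \in P_N^\alpha$ pour un $N$ assez grand. L'identité clef à établir est alors $x_n^\alpha = x_n^\beta + v$ pour tout $n \geq N$~; en effet, les différences $2^n\vec{r}(\alpha_n) = x_{n+1}^\alpha - x_n^\alpha = x_{n+1}^\beta - x_n^\beta = 2^n\vec{r}(\beta_n)$ en déduisent $\alpha_n = \beta_n$ pour tout $n \geq N$, d'où la cofinalité.

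Pour démontrer cette identité, j'observerai que $P_n^\alpha = B_{\Phi(\alpha)}(x_n^\alpha, 2^n-1)$ et $P_n^\beta + v = B_{\Phi(\alpha)}(x_n^\beta + v, 2^n-1)$ sont deux copies translatées de $T_n'$ à l'intérieur de $\Phi(\alpha)$, contenant toutes deux la feuille $v$ (par choix de $N$ pour la première, et parce que $0 \in P_n^\beta$ se translate en $v$ pour la seconde) ainsi que le sommet $0$. L'obstacle principal sera d'invoquer la rigidité de la boule $T_n'$ — dont le centre de $D_4$-symétrie hiérarchique est l'unique sommet permettant de reconstruire par récurrence intrinsèque les choix du squelette effectués dans la preuve de la proposition~\ref{thX} — pour conclure que ces deux copies doivent coïncider, ce qui forcera la coïncidence des centres $x_n^\alpha = x_n^\beta + v$ dès que $n \geq N$.
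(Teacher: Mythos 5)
Your implication $(\Leftarrow)$ is correct and is exactly the argument the paper has in mind: once $\alpha_m=\beta_m$ for all $m\geq N$, the displayed translation formula $\Phi(\beta_0\dots\beta_n)=\Phi(\alpha_0\dots\alpha_n)-v_n$ with $v_n=\sum_{i=0}^{n}2^i\big(\vec{r}(\alpha_i)-\vec{r}(\beta_i)\big)$ shows that $v_n$ stabilises at $v=\sum_{i=0}^{N-1}2^i\big(\vec{r}(\alpha_i)-\vec{r}(\beta_i)\big)$, and taking increasing unions gives $\Phi(\beta)=\Phi(\alpha)-v$ with $v\in\Phi(\alpha)$, i.e.\ $\R$-equivalence. (The paper itself gives no written proof beyond ``un argument simple montre alors que'', so the comparison is necessarily with the intended argument rather than an explicit one; your $(\Leftarrow)$ matches it.)

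The implication $(\Rightarrow)$, however, contains a genuine gap, and you have located it yourself: the entire direction is reduced to the assertion that two translated copies of $T'_n=\overline{B}_{T_\infty}(0,2^n-1)$ contained in $\Phi(\alpha)$ and both containing the valence-one vertices $0$ and $v$ must coincide, and this assertion is only ``invoked'', not proved. It is precisely the non-trivial content of the proposition. The justification you sketch (a hierarchical symmetry centre of $T'_n$ that would intrinsically reconstruct the skeleton) does not engage with the real difficulty: $T_\infty$ does contain distinct overlapping translated copies of these balls --- already the copies of $\overline{B}_{T_\infty}(0,1)$ centred at $(0,0)$ and at $(2,0)$ both lie in $T_2\subset T_\infty$ and share the vertex $(1,0)$ --- so nothing prevents two nearby copies of $T'_n$ from meeting, and the uniqueness you need must really exploit the hypothesis that the common vertices $0$ and $v$ have valence one in $\Phi(\alpha)$ together with the fine combinatorics of where such permanent leaves can sit inside a copy of $T'_n$. (There is also a small slip: for $P^\beta_n+v$ to contain $0$ you need $-v\in P^\beta_n$, which forces you to enlarge $N$; your choice of $N$ only guarantees $v\in P^\alpha_N$.) The way to close the gap with the tools already in the paper is to run the coding algorithm from the proof of Proposition~\ref{thX} on $T=\Phi(\alpha)-v$ and to show, by the same case analysis --- including the disambiguation rule $B_T(x_{n+1}^i,2^{n+2}-1)=B_{T_\infty}(0,2^{n+2}-1)$ used there when the sphere $S_T(x_n,2^n)$ has two points --- that its skeleton $(y_n)$ satisfies $y_n=x_n^\alpha-v$ for all $n$ large enough; the uniqueness of the coding then yields $\beta_n=\alpha_n$ for those $n$. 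As written, your argument assumes this stabilisation rather than proving it.
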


La remarque pr\'ec\'edente montre aussi que $\Phi$ est injective, ce qui nous donne:  

\begin{prop} L'application 
$\Phi : \Suc_4 \to Y^{\leq 2}$ est une bijection
\end{prop}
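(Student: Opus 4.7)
The plan is to establish the injectivity of $\Phi$, since surjectivity onto $Y^{\leq 2}$ was noted in the proof of Proposition~\ref{thX}. Suppose $\Phi(\alpha) = \Phi(\beta)$: the two coded trees are trivially $\R$-equivalent (via the neutral element of $\Z^2$), so Proposition~\ref{Phicompatible} implies that $\alpha$ and $\beta$ are cofinal. Pick $n_0$ such that $\alpha_m = \beta_m$ for every $m \geq n_0$.

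For such cofinal sequences, the translation vector appearing in the identity $\Phi(\beta_0\dots\beta_{n-1}) = \Phi(\alpha_0\dots\alpha_{n-1}) - v_n$ displayed just before Proposition~\ref{Phicompatible} stabilizes for $n \geq n_0$ at $v := \sum_{i=0}^{n_0-1} 2^i\bigl(\vec{r}(\alpha_i) - \vec{r}(\beta_i)\bigr)$, because all summands with $i \geq n_0$ vanish. Passing to the increasing union over $n \geq n_0$ yields $\Phi(\beta) = \Phi(\alpha) - v$. The hypothesis $\Phi(\alpha) = \Phi(\beta)$ therefore says that translation by $-v$ stabilizes $\Phi(\alpha)$, and the aperiodicity of $\Phi(\alpha)$ (observed at the end of its construction) forces $v = 0$.

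It remains to derive $\alpha_i = \beta_i$ for every $i < n_0$ from the vanishing of $v$. The possible values of $d_i := \vec{r}(\alpha_i) - \vec{r}(\beta_i)$ form the finite set $\{0, \pm 2, \pm 2i, \pm(1+i), \pm(1-i)\} \subset \Z[i]$, and a direct check shows that every such $d$ satisfies $\mathrm{Re}(d) \equiv \mathrm{Im}(d) \pmod 2$. Let $k$ be the smallest index with $d_k \neq 0$ (if any). Reducing $\sum_i 2^i d_i = 0$ modulo $2$ forces $d_k \in 2\Z[i]$, hence $d_k \in \{\pm 2, \pm 2i\}$; dividing the residual equation by $2$ and reducing modulo $2$ once more imposes on $d_{k+1}$ a congruence with real and imaginary parts of opposite parity, which contradicts the parity condition above. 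Therefore every $d_i$ vanishes and $\alpha = \beta$. The only non-routine ingredient is this short $2$-adic parity argument; everything else follows immediately from Proposition~\ref{Phicompatible} together with the aperiodicity of $\Phi(\alpha)$ noted during its construction.
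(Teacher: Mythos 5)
Your proof is correct and rests on exactly the ingredient the paper invokes, namely the translation formula $\Phi(\beta_0\dots\beta_n)=\Phi(\alpha_0\dots\alpha_n)-v$ stated just before Proposition~\ref{Phicompatible}; the paper merely asserts that this remark yields injectivity (surjectivity having been noted after Proposition~\ref{thX}). Your elaboration --- cofinality via Proposition~\ref{Phicompatible}, aperiodicity of $\Phi(\alpha)$ to force $v=0$, and the $2$-adic parity check on the differences $\vec{r}(\alpha_i)-\vec{r}(\beta_i)$ to conclude $\alpha=\beta$ --- correctly supplies the details the paper leaves implicit.
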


Signalons que l'expansion binaire des \'el\'ements de $\Z_4$ fournit un hom\'eomor\-phisme
entre  $\Suc_4 = \Z_4^\N $ et  $\Suc_2 = \Z_2^{\N}$, induit par les substitutions 
$0\rightarrow 00$, $1\rightarrow 10$,  $2\rightarrow 01$ et  $3\rightarrow 11$
obtenues en rempla\c{c}ant  $k \in  \Z_4$ par un couple d'\'el\'ements $a(k)$ et $b(k)$ de $\Z_2$ tels que  $k = a(k) + 2b(k)$. \'Evidemment les relations cofinales sur $\Suc_4 = \{0,1,2,3\}^\N $ et $\Suc_2 = \{0,1\}^{\N}$ deviennent  isomorphes.  Il y a d'ailleurs une \'equivalence orbitale entre la relation cofinale $\R_{\mathrm{cof}}$ sur  $\Suc_2$ et la relation d'\'equivalence engendr\'ee par la transformation $T : \ \{0,1\}^{\N} \to  \ \{0,1\}^{\N}$ d\'ecrite dans l'introduction.
Sauf les suites $000\dots$ et $111\dots$ qui appartiennent \`a une m\^eme orbite, les classes de cofinalit\'e co\"{\i}ncident avec les orbites de $T$. 

\setcounter{thm}{0}

\subsection{Dynamique bor\'elienne} \label{sec:Phi}

Emprunt\'ee de  la th\'eorie des pavages, la notion de {\em motif} est le bon outil pour d\'ecrire la $\sigma$-alg\`ebre des bor\'eliens de  $X$.  Tout sous-arbre fini $P$ de $\gZ^2$ contenant l'origine sera appel\'e un \textit{motif} de $\gZ^2$. Nous dirons que $T \in \T$ \textit{contient le motif $P$ autour d'un sommet $p$} si $P+p \subset T$ et nous d\'efinirons  
$X_{P,p}  = \{T\in X / P\ + p \subset  T\}$. Si $p=0$, nous \'ecrirons simplement $X_P$. Comme pour les pavages \cite{bbg}, les ensembles $X_P$ sont des ouverts-ferm\'es  de $X$.
 N\'eanmoins, les motifs ne suffisent pas pour engendrer la topologie de $X$. En effet, la boule $B = \overline{B}_X(\Phi(00\dots),e^{-1})$ est l'ensemble des arbres $T$ tels que $\overline{B}_T(0,1) = \overline{B}_{\Phi(00\dots)}(0,1) = \aristaA$, mais il n'y a aucun motif  $P$ tel que $X_P \subset B$. En fait, 
$$B = X_{\text\aristaA} - \big(
X_{\text\aristaB}\cup X_{\text\aristaC}\cup
X_{\text\aristaD}\big). \vspace{-2ex}$$
En g\'en\'eral, pour tout arbre $T \in X$ et tout entier $r > 0$, la  boule  $\overline{B}_X(T,e^{-r})$ est l'ouvert-ferm\'e $X_{(P,A)}$ associ\'e au {\em motif fin} $(P,A)$ o\`u $ P = \overline{B}_T(e,r)$ et  $A$  est l'ensemble des ar\^etes du graphe  
$\overline{B}_{\gZ^2}(e,r) - B_T(e,r-1)$ qui rencontrent $P$. Par cons\'equent, les ouverts-ferm\'es $X_P$ engendrent la $\sigma$-alg\`ebre des  bor\'eliens. 
\medskip 

L'application de codage $\Phi$ n'est pas continue, car $S_4$ est compact, mais $Y^{\leq 2}$ ne l'est pas. N\'eanmoins,  $\Phi$ a deux  propri\'et\'es importantes: 
 
\begin{prop}\label{Phimeibleabierta} L'application $\Phi: \Suc_4\rightarrow
Y^{\leq 2}$ est bor\'elienne ouverte.
\end{prop}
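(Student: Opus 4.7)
Nous traitons s\'epar\'ement les deux propri\'et\'es, en exploitant le fait que $\Phi(\alpha) = \bigcup_n P_n$ est r\'eunion croissante de sous-arbres finis $P_n = \Phi(\alpha_0 \dots \alpha_{n-1})$ ne d\'ependant que d'un nombre fini de coordonn\'ees de $\alpha$. Pour le caract\`ere bor\'elien, on s'appuie sur le fait, rappel\'e ci-dessus, que la $\sigma$-alg\`ebre bor\'elienne de $X$ est engendr\'ee par les ouverts-ferm\'es $X_P$ associ\'es aux motifs finis $P$ de $\gZ^2$. Il suffit donc de v\'erifier que $\Phi^{-1}(X_P)$ est bor\'elien dans $\Suc_4$ pour tout motif $P$. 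Or, $P$ \'etant fini et la suite $(P_n)$ \'etant croissante, l'inclusion $P \subset \Phi(\alpha)$ \'equivaut \`a l'existence d'un entier $n$ tel que $P \subset P_n$; cette derni\`ere condition ne d\'ependant que des coordonn\'ees $\alpha_0, \dots, \alpha_{n-1}$, elle d\'efinit une r\'eunion finie de cylindres. Il s'ensuit que $\Phi^{-1}(X_P)$ est r\'eunion d\'enombrable d'ouverts-ferm\'es de $\Suc_4$, donc ouvert, et a fortiori bor\'elien.

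Pour montrer que $\Phi$ est ouverte, il suffit d'\'etablir que l'image $\Phi(C_{\beta_0 \dots \beta_n})$ de chaque cylindre basique est ouverte dans $Y^{\leq 2}$. L'outil cl\'e est un \emph{lemme de localit\'e}: pour tout entier $n \geq 0$, il existe un rayon $r(n)$ (de l'ordre de $2^{n+2}$) tel que, pour tous $T, T' \in Y^{\leq 2}$, l'\'egalit\'e $\overline{B}_T(0, r(n)) = \overline{B}_{T'}(0, r(n))$ entra\^ine que les codes $\Phi^{-1}(T)$ et $\Phi^{-1}(T')$ co\"{\i}ncident sur leurs $n+1$ premi\`eres coordonn\'ees. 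Ce lemme se d\'emontre par r\'ecurrence en suivant pas \`a pas l'algorithme de d\'ecodage de la proposition pr\'ec\'edente: \`a l'\'etape $m$, les donn\'ees $\alpha_m$ et $x_{m+1}$ se lisent sur la sph\`ere $S_T(x_m, 2^m)$, la dichotomie ``un ou deux points'' \'etant r\'esolue, dans le second cas, par comparaison d'une boule de rayon $2^{m+2}-1$ avec la boule correspondante de $T_\infty$; il suffit donc que $r(n)$ englobe toutes les boules consult\'ees pour $m = 0, \dots, n$.

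Une fois le lemme acquis, pour tout $\alpha \in C_{\beta_0 \dots \beta_n}$ et $T = \Phi(\alpha)$, la boule $\overline{B}_X(T, e^{-r(n)})$ --- qui co\"{\i}ncide avec un ouvert-ferm\'e $X_{(P, A)}$ associ\'e \`a un motif fin --- intersect\'ee avec $Y^{\leq 2}$ est contenue dans $\Phi(C_{\beta_0 \dots \beta_n})$, fournissant ainsi un voisinage ouvert de $T$ dans $Y^{\leq 2}$. L'obstacle principal sera donc la preuve soigneuse du lemme de localit\'e: il faut expliciter $r(n)$ et v\'erifier que la bifurcation du squelette \`a deux points, ainsi que sa r\'esolution par comparaison avec $T_\infty$, se lisent int\'egralement dans la boule $\overline{B}_T(0, r(n))$.
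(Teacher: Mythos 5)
Your Borel half is exactly the paper's argument: $\Phi^{-1}(X_P)$ is a countable union of cylinders because $P\subset\Phi(\alpha)$ if and only if $P\subset\Phi(\alpha_0\dots\alpha_{n-1})$ for some $n$, and the sets $X_P$ generate the Borel $\sigma$-algebra of $X$. For openness you take a genuinely different route. The paper disposes of it in one line via the identity $\Phi(C^{0\dots n}_{\alpha_0\dots\alpha_n})=X_{\Phi(\alpha_0\dots\alpha_n)}$ (to be read as the trace of that clopen set on $Y^{\leq 2}$): the image of a basic cylinder is characterized by the presence of the finite pattern $\Phi(\alpha_0\dots\alpha_n)$ at the origin. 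You instead show that the image of a cylinder is a neighbourhood of each of its points, by means of a locality lemma asserting that the first $n+1$ letters of the code are read off the ball $\overline{B}_T(0,r(n))$. Both arguments are correct, and your radius estimate checks out: at step $m\leq n$ the decoding algorithm of the surjectivity proof consults $S_T(x_m,2^m)$ and, in the two-point case, balls of radius $2^{m+2}-1$ centred at points of norm at most $2^{m+1}-1$, so $r(n)=3\cdot 2^{n+1}-2$ suffices. What your version buys is honesty: the nontrivial inclusion $X_{\Phi(\alpha_0\dots\alpha_n)}\cap Y^{\leq 2}\subset\Phi(C^{0\dots n}_{\alpha_0\dots\alpha_n})$ behind the paper's identity itself rests on the determinism and locality of the decoding algorithm, i.e.\ on precisely the statement you isolate; the cost is that you leave the induction for that lemma sketched rather than written out, and you should note that applying the algorithm to an arbitrary $T'\in Y^{\leq 2}$ already uses the surjectivity of $\Phi$. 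What the paper's version buys is a sharper conclusion --- the image of a cylinder is the trace of an explicit clopen set $X_Q$ of $X$, not merely an open subset of $Y^{\leq 2}$ --- which is the form used implicitly in the later measure-theoretic computations.
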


\begin{proof} Pour tout motif $P$, l'ensemble $\Phi^{-1}(X_P) = \bigcup_{\alpha_0\dots\alpha_n\in\mathcal{P}}
C_{\alpha_{0}\dots\alpha_{n}}^{0 \dots n}$ o\`u $\mathcal P = \{ \alpha_0\dots\alpha_n / P
\subset  \Phi(\alpha_0\dots\alpha_n) \}$. Par ailleurs, on a
$\Phi(C_{\alpha_{0}\dots\alpha_{n}}^{0 \dots n})=X_{\Phi(\alpha_0\dots\alpha_n)}$. \end{proof}

Nous pouvons maintenant affirmer  que {\em la dynamique transverse bor\'elienne de la lamination de Ghys-Kenyon est repr\'esent\'ee par une machine \`a sommer binaire}. 

\setcounter{thm}{0}

\subsection{Propi\'et\'es  ergodiques} \label{sec:medida}

Soit  $\R$ une relation d'\'equivalence mesurable discr\`ete sur $X$, munie  d'une mesure quasi-invariante ergodique $\mu$. Par analogie avec  la classification des facteurs de F. J. Murray et J. von Neumann, on peut distinguer trois types de relations: 
\smallskip 

\noindent
1) {\em Type $\mathrm{I}_n$ (avec $n = 1,2, \dots, \infty$)}: si $\R$ est transitive (avec cardinal $\# X = n$).
\smallskip 

\noindent
2) {\em Type $\mathrm{II}_n$ (avec $n = 1 \mbox{ ou }  \infty$)}: si  $\R$ n'est pas transitive et  si $\mu$ est \'equivalente \`a une mesure (finie ou infinie) invariante pour $\R$.
\smallskip 

\noindent
3) {\em Type} $\mathrm{III}$: s'il n'existe pas de  mesure invariante \'equivalente \`a $\mu$.
 \smallskip 
 
\noindent
Si $\mu_4$ est la mesure de probabilit\'e \'equidistribu\'ee sur $\Suc_4$, alors $\R_{\mathrm{cof}}$ est de type $\mathrm{II}_1$. 

\begin{prop} \label{thtipoII1}
La relation d'\'equivalence $\R$ sur $X$ est de type
$\mathrm{II}_1$.
\end{prop}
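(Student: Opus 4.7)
La strat\'egie est de construire sur $X$ une mesure de probabilit\'e bor\'elienne $\mu$, $\R$-invariante et ergodique, \`a partir de la mesure de Bernoulli $\mu_4$ transport\'ee par le codage $\Phi$. Puisque $\R[T_\infty]=X\setminus Y$ est une unique orbite d\'enombrable, elle est n\'egligeable pour toute mesure sans atomes et peut \^etre ignor\'ee; tout le travail se concentre ainsi sur $Y$.

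D'abord, $\mu_4$ est $\R_{\mathrm{cof}}$-invariante (chaque transformation partielle engendrant $\R_{\mathrm{cof}}$ est un hom\'eomorphisme entre cylindres de m\^eme $\mu_4$-mesure) et ergodique (m\'elange de Bernoulli), donc $(\R_{\mathrm{cof}},\mu_4)$ est de type $\mathrm{II}_1$. Or, d'apr\`es les propositions~3.3.1, 3.3.2 et~3.4.1, $\Phi:\Suc_4\to Y^{\leq 2}$ est une bijection bor\'elienne conjuguant $\R_{\mathrm{cof}}$ avec $\R\!\mid_{Y^{\leq 2}}$; la probabilit\'e transport\'ee $\nu:=\Phi_*\mu_4$ fait donc de $\R\!\mid_{Y^{\leq 2}}$ une relation de type $\mathrm{II}_1$.

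Pour obtenir $\mu$ sur $X$ tout entier, on utilise le fait que $Y^{\leq 2}$ est une section bor\'elienne compl\`ete de $\R$ dans $Y$ (proposition~3.2.1), et l'on applique l'approche standard d'induction d'une mesure $\R\!\mid_{Y^{\leq 2}}$-invariante vers une mesure $\R$-invariante sur la saturation $\R[Y^{\leq 2}]=Y$. La v\'erification de la $\R$-invariance de $\mu$ se r\'eduit, pour chaque translation $\tau_h$, \`a des identit\'es combinatoires sur la partition de $Y$ en fibres du s\'electeur (fourni par le th\'eor\`eme de Lusin--Novikov), et utilise l'invariance de $\nu$ sous $\R\!\mid_{Y^{\leq 2}}$. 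L'ergodicit\'e se transmet de $\nu$ \`a $\mu$ puisque tout bor\'elien $\R$-invariant dans $X$ coupe $Y^{\leq 2}$ en un $\R\!\mid_{Y^{\leq 2}}$-invariant, donc de $\nu$-mesure $0$ ou $1$.

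L'obstacle principal sera d'assurer que la mesure \'etendue est finie (ce qui distingue le type $\mathrm{II}_1$ du type $\mathrm{II}_\infty$). Les orbites de $\R$ \'etant infinies d\'enombrables, l'induction na\"{\i}ve via un s\'electeur donnerait une mesure de masse nulle, tandis qu'une sommation sur les translat\'es produirait en g\'en\'eral une mesure infinie. Il faudra donc soit contr\^oler l'indice asymptotique $|\R[T]\cap Y^{\leq 2}|$ par la r\'ep\'etitivit\'e uniforme (th\'eor\`eme~2.5.2), soit invoquer la moyennabilit\'e de $G=\mathbb{Z}^2$ via des moyennes de F\o{}lner sur les cylindres, pour garantir la convergence vers une probabilit\'e bor\'elienne bien d\'efinie.
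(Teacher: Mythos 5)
Votre strat\'egie --- transporter la mesure de Bernoulli $\mu_4$ par le codage $\Phi$, puis induire la mesure obtenue de la section $Y^{\leq 2}$ vers son satur\'e --- est r\'eellement diff\'erente de celle du texte et pourrait aboutir; mais, telle quelle, elle laisse ouverte l'\'etape que vous qualifiez vous-m\^eme d'obstacle principal: la finitude de la mesure induite. Vous proposez deux pistes au conditionnel sans en mener aucune \`a terme, alors que c'est pr\'ecis\'ement ce point qui s\'epare le type $\mathrm{II}_1$ du type $\mathrm{II}_\infty$. La lacune se comble pourtant \`a peu de frais: tout arbre $T\in X$ est un sous-arbre de $\gZ^2$, donc de valence au plus $4$; si tous les sommets de $B_T(x,r)$ \'etaient de valence $\geq 3$, la boule $\overline{B}_T(x,r)$ contiendrait au moins $3\cdot 2^{r}-2$ sommets, ce qui d\'epasse $\#\overline{B}_{\gZ^2}(x,r)=2r^2+2r+1$ d\`es que $r\geq 4$. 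Tout sommet de tout arbre de $X$ est donc \`a distance $\leq 3$ d'un sommet de valence $\leq 2$, d'o\`u $Y\subset\bigcup_{\len_S(g)\leq 3}\tau_g^{-1}(Y^{\leq 2})$ et, par invariance de $\nu$ sous les transformations partielles, $\mu(Y)\leq \#\overline{B}_{\gZ^2}(e,3)\cdot\nu(Y^{\leq 2})<\infty$. Sans un argument de ce type, la preuve est incompl\`ete. Les autres \'etapes (invariance et ergodicit\'e de $\mu_4$ pour la relation cofinale, transport par la bijection bor\'elienne $\Phi$, induction le long d'une section compl\`ete, n\'egligeabilit\'e de l'orbite d\'enombrable $\R[T_\infty]$) sont standard et correctes dans leur principe.

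Notez que le texte contourne enti\`erement cette difficult\'e: il construit $\mu$ directement sur $X$ comme limite faible des mesures de comptage normalis\'ees sur les boules $\overline{B}_{T_\infty}(0,n)$ de la feuille passant par $T_\infty$, l'invariance provenant de la croissance sous-exponentielle du volume $V(n)$ (la fr\'equence d'un motif ne change pas sous une translation born\'ee). Le codage n'y intervient qu'apr\`es coup, et dans le sens inverse du v\^otre: la proposition~\ref{thequivorbest} part de la mesure $\mu$ d\'ej\`a construite et utilise l'unique ergodicit\'e de $\mu_4$ pour l'identifier \`a $\Phi_\ast\mu_4$ sur $Y^{\leq 2}$. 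Votre ordre logique donnerait en prime l'ergodicit\'e sans travail suppl\'ementaire, mais il exige de clore l'argument de finitude ci-dessus.
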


\begin{proof} Pour tout  $n\geq 1$, notons $B_n$ la boule de centre 
$T_\infty$ et de rayon $n$ contenue 
dans  $\R[T_\infty]$. L'isomorphisme  entre  $\overline{\R}[T_\infty]$ et $T_\infty$ identifie $B_n$ avec  $B_{T_\infty}(0,n)$.  Soit $\mu_n$ la mesure de comptage sur $B_n$.  Pour tout  motif $P$, on a:
$$
\mu_n(X_P) = \frac{\#  B_n \cap X_P}{\#  B_n} =
  \frac{\#  \{ \ p \in B_{T_\infty}(0,n) \ / \ P + p \subset T_\infty \ \}}{\#  B_{T_\infty}(0,n))}
=   \frac{A(P,n)}{V(n)}
$$
Quitte \`a extraire une sous-suite, on peut supposer que $\mu_n$ converge faiblement vers une mesure  de probabilit\'e  $\mu$. Puisque $X_P$ est un ouvert-ferm\'e, on a:
$$
\label{defmu} \mu(X_P) = \lim_{n \to \infty} \mu_n(X_P) =
 \lim_{n\rightarrow\infty}
  \frac{A(P,n)}{V(n)} =  \mbox{\emph{fr\'equence du motif $P$}}.
$$
D'autre part, pour tout
sommet $v \in P$, l'ensemble 
$X_P - v  = X_{P-v}$ est l'image de $X_P$ par la 
 translation $\tau_v(T) =  T-v$. 
Si  $T_\infty$ contient le motif $P$ autour d'un point $p \in B_{T_\infty}(0,n-r)$, il contient aussi
le motif  $P-v$ autour du point  $p + v \in  B_{T_\infty}(0,n)$ avec $r > \norm{v}$.
Donc \vspace{-1ex}
$$
\den{\mu_n(X_P-v)-\mu_n(X_P)}   \leq   \frac{V(n) - V(n-r)}{V(n)}  \leq  \frac{V(n+r) - V(n-r)}{V(n)}
$$
pour tout $n \in \N$. Mais puisque la fonction  $V(n)$ est \`a croissance sous-exponentielle, il vient $ \lim_{n \to \infty} \den{\mu_n(X_P-v)-\mu_n(X_P)}  =  0$ et donc $\mu$ est invariante pour $\R$. 
   \end{proof}

\begin{prop} \label{thequivorbest}
L'application $\Phi$ d\'efinit une \'equivalence
orbitale stable entre les relations d'\'equivalence mesur\'ees $\R_{\mathrm{cof}}$ sur $\Suc_4$ et $\R$ sur  $X$.
\end{prop}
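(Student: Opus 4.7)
The plan is to verify that the coding map $\Phi$ itself implements the required equivalence, once its target is restricted to $Y^{\leq 2}$. Concretely, I will check four points: (i) $\Phi : \Suc_4 \to Y^{\leq 2}$ is a Borel isomorphism; (ii) $\Phi$ conjugates $\R_{\mathrm{cof}}$ with the induced relation $\R \!\mid_{Y^{\leq 2}}$; (iii) the saturations of $\Suc_4$ and $Y^{\leq 2}$ have full measure; and (iv) $\Phi_\ast \mu_4 \sim \mu \!\mid_{Y^{\leq 2}}$. For (i), the two preceding propositions give that $\Phi$ is a Borel bijection (in fact Borel and open, by Proposition~\ref{Phimeibleabierta}); since $\Suc_4$ and $X$ are standard Borel spaces, the Lusin--Souslin theorem ensures that $\Phi^{-1}$ is Borel as well.

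For (ii), Proposition~\ref{Phicompatible} states precisely $\Phi(\R_{\mathrm{cof}}[\alpha]) = \R[\Phi(\alpha)] \cap Y^{\leq 2}$. For (iii), the $\Suc_4$ side is trivial; for the other side, $\R[Y^{\leq 2}] = Y = X \setminus \R[T_\infty]$ by Proposition~\ref{thX}. Since $T_\infty$ is aperiodic, $\R[T_\infty]$ is a countably infinite orbit (in bijection with $\Z^2$), and a finite $\R$-invariant measure cannot give positive mass to any point of such an orbit without becoming infinite; hence $\mu(\R[T_\infty]) = 0$ and $\mu(Y) = 1$. The same invariance, applied to the countable family of partial translations generating $\R$, also forces $\mu(Y^{\leq 2}) > 0$, for otherwise $Y$ would be a countable union of $\mu$-null sets.

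The substantive step is (iv). By (ii) and the $\R$-invariance of $\mu$, the pullback $\nu := \Phi^\ast(\mu \!\mid_{Y^{\leq 2}})$ is a finite $\R_{\mathrm{cof}}$-invariant Borel measure on $\Suc_4$. Via the base-$2$ identification $\Suc_4 \simeq \Suc_2$ recalled in the introduction, $\R_{\mathrm{cof}}$ is realised as the orbit relation of the dyadic odometer (the binary adding machine), which is uniquely ergodic with Haar measure---corresponding here to $\mu_4$---as its only invariant probability measure. Consequently $\nu$ is a positive scalar multiple of $\mu_4$, yielding the desired equivalence. The main obstacle is concentrated in this last step; a more direct but heavier alternative would be to compute $\mu(X_{\Phi(\alpha_0\dots\alpha_n)})$ via the frequency formula from the proof of Proposition~\ref{thtipoII1} and verify it equals $\mu(Y^{\leq 2}) / 4^{n+1}$, exploiting the self-similar construction of both $T_\infty$ and $\Phi$.
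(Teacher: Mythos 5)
Your proposal is correct and follows essentially the same route as the paper: transport $\mu\!\mid_{Y^{\leq 2}}$ back to $\Suc_4$ through $\Phi^{-1}$ using the conjugation of Proposition~\ref{Phicompatible}, then invoke the unique ergodicity of $\mu_4$ for the cofinal relation (the binary odometer) to identify the pulled-back measure with a multiple of $\mu_4$. The only differences are cosmetic: you justify $\mu(Y)=1$ and $\mu(Y^{\leq 2})>0$ abstractly via invariance, whereas the paper simply computes $\mu(X^{\leq 2})=\tfrac{3}{4}$; both arguments are sound.
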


\begin{proof} Puisque le satur\'e 
de $Y^{\leq 2}$ est de mesure totale, il nous suffit de d\'emontrer que $\Phi : \Suc_4 \to Y^{\leq 2}$ envoie $\mu_4$ sur une mesure \'equivalente \`a $\mu  |_{Y^{\leq 2}}$. Par l'invariance de $\mu$, on a
$\mu(X^{\leq 2}) =\frac{3}{4}$ et donc $\mu_{X^{\leq 2}} = \frac{4}{3} \mu |_{X^{\leq 2}}$ est une mesure de probabilit\'e sur $X^{\leq 2}$ invariante pour $\R \!
\mid_{X^{\leq 2}}$. L'inverse de $\Phi$  envoie la mesure induite par $\mu_{X^{\leq 2}}$ sur une mesure de probabilit\'e sur $\Suc_4$ invariante pour $\R_{\mathrm{cof}}$. L'unicit\'e ergodique de $\mu_4$ entra\^{\i}ne que
$\Phi_\ast \mu_4 = \mu_{X^{\leq 2}}  |_{Y^{\leq 2}}$.   \end{proof}

\begin{thm} \label{dinmed}
La dynamique transverse mesurable  de la lamination de Ghys-Kenyon $(\M,\L)$ est r\'epresent\'ee par une machine \`a sommer  binaire. En outre, elle est uniquement ergodique. 
\end{thm}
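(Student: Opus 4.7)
La preuve consiste essentiellement \`a encha\^{i}ner les r\'esultats d\'ej\`a \'etablis. D'apr\`es les th\'eor\`emes~\ref{th:TRG1} et~\ref{th:TRG2}, la dynamique transverse mesurable de la lamination $(\M,\L)$ se lit sur la relation d'\'equivalence mesur\'ee $(\R,X,\mu)$ induite sur la transversale compl\`ete $X$, o\`u $\mu$ est la mesure de probabilit\'e $\R$-invariante construite dans la proposition~\ref{thtipoII1}. La proposition~\ref{thequivorbest} fournit d\'ej\`a, via l'application de codage $\Phi$, une \'equivalence orbitale stable entre $(\R,X,\mu)$ et la relation cofinale mesur\'ee $(\R_{\mathrm{cof}},\Suc_4,\mu_4)$.

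Il reste alors \`a identifier $(\R_{\mathrm{cof}},\Suc_4,\mu_4)$ avec la dynamique de la machine \`a sommer binaire. Le codage dyadique $0 \to 00,\ 1 \to 10,\ 2 \to 01,\ 3 \to 11$ rappel\'e au \S3.3 induit un isomorphisme bor\'elien entre $(\Suc_4,\mu_4)$ et $(\Suc_2,\mu_2)$ envoyant $\R_{\mathrm{cof}}$ sur $\R_{\mathrm{cof}}$. Sur $\Suc_2$, la relation cofinale et celle engendr\'ee par l'odom\`etre $T$ ne diff\`erent que sur la paire $\{000\dots, 111\dots\}$, qui est $\mu_2$-n\'egligeable; elles sont donc orbitalement \'equivalentes comme relations mesur\'ees, et par composition on obtient l'\'equivalence orbitale stable annonc\'ee entre $(\R,X,\mu)$ et la machine \`a sommer binaire.

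Pour l'unique ergodicit\'e, je me ram\`ene au m\^eme mod\`ele. Soit $\nu$ une mesure de probabilit\'e $\R$-invariante sur $X$. Comme l'orbite $\R[T_\infty]$ est d\'enombrable, infinie et homog\`ene sous l'action de $\R$ par bijections partielles, toute mesure $\R$-invariante finie s'y annule, donc $\nu$ est concentr\'ee sur $Y$. La bijection bor\'elienne $\Phi : \Suc_4 \to Y^{\leq 2}$ \'etant une \'equivalence orbitale entre $\R_{\mathrm{cof}}$ et $\R \! \mid_{Y^{\leq 2}}$, la mesure $(\Phi^{-1})_\ast(\nu \! \mid_{Y^{\leq 2}})$ est $\R_{\mathrm{cof}}$-invariante sur $\Suc_4$, donc proportionnelle \`a $\mu_4$ par l'unique ergodicit\'e bien connue de la mesure de Bernoulli pour la machine \`a sommer. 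Puisque le satur\'e bor\'elien de $Y^{\leq 2}$ est de mesure totale dans $X$, la mesure $\nu$ est enti\`erement d\'etermin\'ee par sa restriction \`a $Y^{\leq 2}$, d'o\`u $\nu = \mu$. Le seul point d\'elicat, quoique purement technique, est ce transfert de l'unique ergodicit\'e \`a travers l'\'equivalence orbitale stable; il se r\'eduit pr\'ecis\'ement \`a la nullit\'e de $\nu$ sur l'orbite exceptionnelle $\R[T_\infty]$.
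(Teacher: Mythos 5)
Votre d\'emonstration est correcte et suit essentiellement la voie que l'article laisse implicite : le th\'eor\`eme y est \'enonc\'e sans preuve, comme cons\'equence directe des propositions~\ref{thtipoII1} et~\ref{thequivorbest} et de l'identification, rappel\'ee au \S 3.3, de la relation cofinale $\R_{\mathrm{cof}}$ avec celle engendr\'ee par l'odom\`etre binaire (\`a l'orbite n\'egligeable de $000\dots$ pr\`es). Votre v\'erification de l'unique ergodicit\'e --- annulation de toute mesure invariante finie sur l'orbite d\'enombrable infinie $\R[T_\infty]$, puis transfert par la bijection bor\'elienne $\Phi$ vers l'unique mesure invariante $\mu_4$ --- ne fait qu'expliciter correctement l'argument que l'article emploie d\'ej\`a dans la preuve de la proposition~\ref{thequivorbest}.
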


\noindent
Un tr\`es joli r\'esultat d'\'E. Ghys  \cite{ghys1} permet de parler du type topologique des feuilles g\'en\'eriques de $\L$. De notre cas, on a que:
\smallskip

{\em
\noindent 
i)  il y a un ensemble satur\'e r\'esiduel et de mesure totale dont toutes les feuilles ont exactement un bout; 
\smallskip

\noindent ii)  il y a un ensemble satur\'e maigre et de mesure nulle constitu\'e par une infinit\'e non d\'enombrable des feuilles ayant deux bouts; 
\smallskip

\noindent iii) il y a une seule feuille avec quatre bouts.}
\smallskip 

\noindent
Le point essentiel est de v\'erifier qu'il y a  correspondance biunivoque entre l'ensemble des feuilles ayant deux bouts et l'ensemble des suites de $\Suc_4$ contenant un nombre fini de d\'etours et une infinit\'e d'aller et retours. Pour toute suite $\alpha\in\Suc_4$, nous appelons {\em aller et retour} (resp. {\em d\'etour})  tout couple $\alpha_n\alpha_{n+1}$ avec $\alpha_n \neq \alpha_{n+1}$ ayant la m\^eme (resp. distincte) parit\'e. Cela permet de montrer que l'ensemble des feuilles \`a deux bouts est non d\'enombrable de mesure nulle. D'apr\`es le lemme 2.6 de \cite{B01}, l'ensemble des feuilles  ayant  un bout est r\'esiduel. 

\setcounter{thm}{0}

\section{Dynamique topologique }  \label{DT}

Toutes les $\R$-classes du minimal de Ghys-Kenyon sont obtenues \`a partir des m\^emes motifs par un m\^eme proc\'ed\'e d'inflation. Nous utiliserons l'inclusion de ces motifs dans les motifs qui r\'esultent de l'inflation pour d\'ecrire sa dynamique topologique.

\subsection{Relations d'\'equivalence affables}

Une relation d'\'equivalence $\beta$-discr\`ete $\R$ sur un espace localement compact  s\'epar\'e $X$ est dite {\em  compacte} \cite{gps04} si $\R - \Delta$ est compact o\`u $\Delta$ est la diagonale de 
$X \times X$. 

\begin{defn}[\cite{gps04}] Une relation d'\'equivalence $\R$ d\'efinie sur un espace totale\-ment disconnexe $X$ est dite {\em affable} s'il existe une suite croissante de relations d'\'equivalence compactes $\R_n$ telle que $\R=\bigcup_{n \in {\mathbb N}} \R_n$. Si on munit $\R$ de la topologie limite inductive, alors 
$\R = \varinjlim \R_n$ est une relation d'\'equivalence $\beta$-discr\`ete
{\em approximativement finie} (AF en abr\'eg\'e).
\end{defn}

Un {\em diagramme de Bratteli} est un graphe orient\'e $\B = (V,E)$ dont les ensembles de sommets et d'ar\^etes admettent des d\'ecompositions $V = \bigsqcup_{n\geq 0} V_n$ et $E = \bigsqcup_{n\geq 0} E_n$ o\`u $V_n$ et $E_n$ sont des ensembles finis non vides tels que  pour toute ar\^ete $e \in E_n$,  l'origine $\alpha(e)  \in V_n$ et  l'extremit\'e $\beta(e)\in V_{n+1}$  \cite{gps04}. On appelle {\em source} tout sommet  $v$ tel que $\beta^{-1}(v) = \emptyset$. Soit  $X_\B$ l'espace des chemins infinis $e_ne_{n+1}e_{n+2}\dots$ (avec  $\alpha(e_{i+1}) = \beta(e_i)$) issus d'une source $ \alpha(e_n)$ de $\B$.  La relation d'\'equivalence  \emph{cofinal} $\R_\B$ sur $X_\B$ (qui identifie $e_ne_{n+1}\dots$ et $e'_m,e'_{m+1}\dots$ s'il existe $N \geq m,n$ tel que $e'_i=e_i$ pour tout $i\geq N$) est affable. En fait, d'apr\`es \cite{gps04}, toute relation d'\'equivalence  AF sur $X$ est isomorphe \`a la relation cofinale $\R_\B$ sur $X_\B$ d\'efinie par un diagramme de Bratteli $\B$.

\setcounter{thm}{0}

\subsection{Affabilit\'e du minimal de Ghys-Kenyon} \label{sec:af}

Soit  $\mathcal{P}_n = \{ A_n^0, A_n^1 , A_n^2, A_n^3, B_n^0,B_n^1, C_n \}$
la famille de {\em motifs basiques de taille  $n$} d\'efinis par $A_n^k =   \overline{B}_{T_\infty}(0,2^n-1) \cup e^k_n$, 
$B_n^k =  \overline{B}_{T_\infty}(0,2^n-1) \cup e^k_n \cup e^{k+2}_n$ et 
$C_n =  \overline{B}_{T_\infty}(0,2^n)$
o\`u $e^k_n$ est l'ar\^ete qui r\'elie $(2^n-1)\vec{r}(k)$ et $2^n\vec{r}(k)$ pour tout $k \in \Z_4$ (voir la figure~6).  Deux \'el\'ements $T$ et $T'$ de $X - X_{C_n}$ sont 
{\em $\R_n$-\'equivalents} s'il existe un motif  basique $P\in\mathcal{P}_n-\{C_n\}$ et deux sommets $v, v'\in P$ avec $\norm{v},\norm{v} < 2^n$ tels  que $P \subset T-v = T'-v'$. D'autre part,  la relation $\mathcal{R}_n$ est triviale sur $X_{C_n}$. 

$$
\hspace{1.9cm}
\includegraphics[width=1.2in]{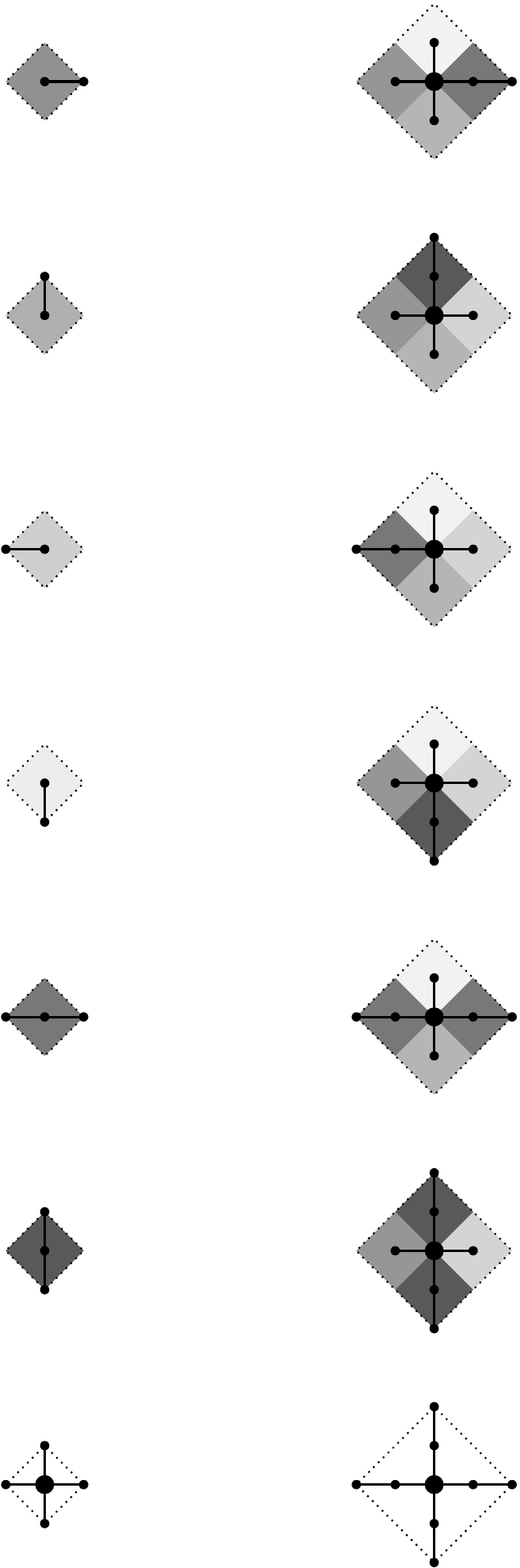}
\hspace{-4.7cm}
\begin{array}[b]{ll}
A_0^0 \hspace{4.9cm} & A_1^0 =  C_0 + A_0^0 + A_0^1 + A_0^3 + B_0^0 \vspace{0.87cm} \\
A_0^1  \hspace{4.9cm} & A_1^1 = C_0 + A_0^0 + A_0^1 + A_0^2 + B_0^1\vspace{0.87cm}  \\
A_0^2  \hspace{4.9cm}  & A_1^2 = C_0 + A_0^1 + A_0^2 + A_0^3 + B_0^0 \vspace{0.87cm} \\
A_0^3  \hspace{4.9cm} & A_1^3 = C_0 + A_0^0 + A_0^2 + A_0^3 + B_0^1 \vspace{0.87cm} \\
B_0^0   \hspace{4.9cm} & B_1^0 = C_0 + A_0^1 + A_0^3 + 2\,B_0^0 \vspace{0.87cm}  \\
B_0^1  \hspace{4.9cm} & B_1^1 = C_0 + A_0^0 + A_0^2 + 2\,B_0^1 \vspace{0.87cm} \\
C_0  \hspace{4.9cm} & C_1 = C_0 + 2B_0^0 + 2 B_0^1 \vspace{1ex}
\end{array}
$$

\begin{center}
Figure 6: Les familles $\mathcal{P}_0$ et $\mathcal{P}_1$ et les r\`egles
d'inflation
\end{center}
\setcounter{figure}{6}

\begin{prop}
Les relations d'\'equivalence $\R_n$ sont compactes et ouvertes dans $\R$ et donc $\R_\infty = \bigcup_{n \in {\mathbb N}} \R_n$ est affable et  ouverte dans  $\R$. 
\end{prop}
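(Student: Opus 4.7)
Le plan est d'extraire de la définition une décomposition finie explicite de $\R_n - \Delta$ comme réunion de graphes de translations partielles; cela fournira simultanément la compacité et l'ouverture dans $\R$. L'affabilité de $\R_\infty$ suivra une fois vérifiées les inclusions $\R_n \subset \R_{n+1}$.

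D'abord, puisque $X_{C_n}$ est ouvert-fermé et saturé pour $\R_n$ avec restriction triviale, il ne contribue pas à $\R_n - \Delta$. Sur le complémentaire ouvert-fermé $X - X_{C_n}$, la définition donne la décomposition
\[
\R_n - \Delta = \bigcup_{(P,v,v')} \bigl\{ (T, T-(v-v')) : T \in X_{P,v} \cap (X - X_{C_n}),\ T - (v-v') \in X_{P,v'} \cap (X - X_{C_n}) \bigr\},
\]
où la réunion porte sur l'ensemble fini des triplets $(P,v,v')$ avec $P \in \mathcal{P}_n - \{C_n\}$ et $v \neq v'$ sommets de $P$ de norme $< 2^n$. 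La compacité en résultera immédiatement: chaque morceau est le graphe de la translation continue $T \mapsto T - (v-v')$ restreinte à un fermé de l'espace compact $X$, et une réunion finie de compacts est compacte. Lue différemment, cette même décomposition exhibera $\R_n$ comme réunion finie d'ouverts de base $O(U, v-v')$ pour la topologie $\boldsymbol\beta$-discrète sur $\R$, avec $U$ ouvert-fermé codant les conditions de motif; en y adjoignant l'ouvert $\varepsilon(X_{C_n})$ issu de la partie triviale, on conclura que $\R_n$ est ouverte dans $\R$.

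Le point délicat restant est l'inclusion $\R_n \subset \R_{n+1}$, nécessaire pour que $\R_\infty = \bigcup_n \R_n$ soit une relation d'équivalence. Pour $T \in X_{C_n}$ l'inclusion est tautologique, la $\R_n$-classe étant $\{T\}$. Pour $T \notin X_{C_n}$, si $T \sim_n T'$ via $P \in \mathcal{P}_n - \{C_n\}$ et les sommets $v, v'$, la table d'inflation de la figure~6 identifiera $P$ à un sous-motif d'un unique motif $P' \in \mathcal{P}_{n+1}$ à une translation contrôlée près, et $v, v'$ resteront de norme $< 2^{n+1}$. Le principal obstacle sera de traiter le cas où $P' = C_{n+1}$: il faudra alors déplacer le point base témoin en exploitant les symétries translationnelles inscrites dans les règles d'inflation pour sortir de $X_{C_{n+1}}$, ce qui s'appuiera sur la forme explicite des motifs de $\mathcal{P}_{n+1}$. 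Cette vérification combinatoire étant acquise, $\R_\infty$ apparaîtra comme réunion croissante de sous-relations d'équivalence compactes et ouvertes de $\R$, donc affable par définition, et ouverte dans $\R$ par héritage.
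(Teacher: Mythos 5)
Votre preuve est correcte et suit essentiellement la même route que celle de l'article : l'ouverture s'obtient en recouvrant $\R_n$ par des ouverts de base $O(U,w)$ issus des conditions de motif, et la compacité en réécrivant $\R_n - \Delta$ comme réunion finie de graphes de translations partielles définies sur les ouverts-fermés (donc compacts) $X_{P,v}$. La seule différence est que vous signalez explicitement la vérification de la croissance de la suite $(\R_n)$, point que la preuve de l'article passe sous silence — mais notez que vous la laissez vous aussi à l'état d'esquisse (\og cette vérification combinatoire étant acquise \fg).
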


\begin{proof} Montrons que $\R_n$ est ouverte dans $\R$. Pour tout couple  $(T,T' ) \in \R_n$, il existe un motif  $P \in \mathcal{P}_n$  et deux sommets $v,v' \in P$ avec $\norm{v}, \norm{v'} < 2^n$ tels que $P \subset T- v=T'-v'$. Choisissons $N > 0$ tel que  $P + v  \subseteq B_T(0,N)$, puis consid\'erons l'ouvert $U = \{  T''  \in X  /  B_{T''}(0,N)=B_T(0,N)  \}$ de $X$ et l'ouvert 
$O(U,w)$ de $\R$ o\`u $w = v-v'$. Pour tout $T''  \in U$, le couple $(T'',T''-w) \in \R_n$ car $T''$ contient  le motif $P$ autour de $v$. Donc $(T,T') \in O(U,v) \subset \R_n$.  Alors $\R_n$ est la r\'eunion des ouverts $O(U,w)$ associ\'es motifs $P \in \mathcal{P}_n$ et aux sommets $v,v' \in P$ tels que  $\norm{v}, \norm{v'} < 2^n$. En rempla\c{c}ant $U$ par l'ouvert-ferm\'e $X_{P,v}$ et $O(U,w)$ par le graphe de la translation $T'' \mapsto T''-w$ d\'efinie sur $X_{P,v}$, nous aurons que $R_n$ est compacte. \end{proof}

Toutes les classes d'\'equivalence de $\R$ et $\R_\infty$ sont \'egales, sauf celle de $T_\infty$ qui se d\'ecompose en la r\'eunion de la classe triviale $\{T_\infty\}$ et de quatre classes isomorphes aux composantes connexes de $T_\infty-\{0\}$. La dynamique topologique de $\R_\infty$ est  repr\'esent\'ee par le diagramme de Bratteli $\B = (V,E)$ o\`u 
$V_0=\{ 0\}$,  $V_{n+1} = 
\mathcal{P}_n = \{ A_n^0, A_n^1 , A_n^2, A_n^3, B_n^0,B_n^1, C_n \}$ et $P \in \mathcal{P}_n$ est reli\'e par une ar\^ete de $E_{n+1}$ \`a $Q  \in \mathcal{P}_{n+1}$ si et seulement si  $Q$ contient une copie fid\`ele de $P$. 
L'isomorphisme 
$\Psi : X \to X_\B$ entre $\R_\infty$ et  $\R_\B$ est donn\'e par $\Psi (T) = (e_0,e_1,\dots )$ o\`u $\beta(e_n)$ est l'unique motif  $P \in \mathcal{P}_{n+1}$ pour lequel $T-v$ appartient \`a l'ouvert-ferm\'e $X_{(P,A)}$ avec $v \in P$ et $A$ form\'e des ar\^etes de $\overline{B}_{T_\infty}(0,2^{n+1})$ qui n'appartiennent pas \`a $P$.
Pour tout $T \in X$ avec $val(T) =4$, l'origine $0$ est l'intersection des translat\'es de quatre motifs basiques de taille $n$. Nous modifierons  alors $\R_n$ pour que $0$ devienne \'equivalent aux autres points du translat\'e de $A_n^0$ ou de $B_n^0$. Nous obtiendrons ainsi une suite de relations d'\'equivalence compactes 
$\R'_n \supset \R_n$. Alors $\R'_\infty = \bigcup_{n \in {\mathbb N}} \R'_n$ est affable. Puisque les bouts de la  feuille de $\F$ passant par $T_\infty$ sont partout denses,  $\R[T_\infty]$ se d\'ecompose en la r\'eunion de quatre orbites denses et donc $\R'_\infty$ est minimale. Nous pouvons maintenant appliquer le corollaire 4.17 de \cite{gps04}: 
 
\begin{thm} La relation d'\'equivalence $\R$ est affable et la dynamique trans\-verse de la lamination de Ghys-Kenyon est  r\'epresent\'ee par un syst\`eme dynamique minimal sur l'ensemble de Cantor. 
\end{thm}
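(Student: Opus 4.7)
La strat\'egie consiste \`a invoquer le corollaire 4.17 de \cite{gps04}, appliqu\'e \`a la sous-relation $\R'_\infty \subset \R$ construite ci-dessus. Ce th\'eor\`eme d'absorption de Giordano-Putnam-Skau affirme qu'une relation d'\'equivalence $\beta$-discr\`ete obtenue \`a partir d'une relation AF minimale par fusion contr\^ol\'ee d'un nombre fini d'orbites denses reste affable. Une fois l'affabilit\'e de $\R$ \'etablie, la repr\'esentation de sa dynamique transverse par un syst\`eme de Bratteli-Vershik minimal sur l'ensemble de Cantor en d\'ecoulera par les r\'esultats standard de \cite{gps04}, puisque l'espace $X$ est hom\'eomorphe au Cantor (compact, totalement disconnexe, m\'etrisable, et sans point isol\'e car toutes les $\R$-orbites sont denses).

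Trois points devront \^etre v\'erifi\'es pour mettre en oeuvre ce corollaire. Premi\`erement, $\R'_\infty$ est AF: les $\R'_n$ sont compactes et ouvertes dans $\R$, par le m\^eme argument que celui d\'ej\`a donn\'e pour les $\R_n$, apr\`es avoir ajout\'e les translations partielles suppl\'ementaires identifiant le sommet $0$ aux autres points du translat\'e de $A_n^0$ ou $B_n^0$ lorsque l'arbre est de valence $4$. Deuxi\`emement, $\R'_\infty$ est minimale: toute orbite $\R'_\infty[T]$ avec $T \neq T_\infty$ co\"{\i}ncide avec la $\R$-orbite, dense par le th\'eor\`eme~\ref{repetitif<=>minimal} appliqu\'e au minimal $X = \overline{\R[T_\infty]}$; l'orbite exceptionnelle $\R[T_\infty]$ se d\'ecompose en quatre $\R'_\infty$-orbites correspondant aux quatre composantes connexes de $T_\infty - \{0\}$, et leur densit\'e r\'esulte de ce que les bouts de la feuille de $\F$ passant par $T_\infty$ sont partout denses (cons\'equence de la r\'ep\'etitivit\'e de $T_\infty$). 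Troisi\`emement, $\R$ et $\R'_\infty$ co\"{\i}ncident en dehors de l'orbite d\'enombrable $\R[T_\infty]$, sur laquelle $\R$ fusionne simplement les quatre classes de $\R'_\infty$ en une seule.

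Le principal obstacle sera la v\'erification technique pr\'ecise des hypoth\`eses de compatibilit\'e avec la filtration de Bratteli dans le formalisme de \cite{gps04}: il faudra contr\^oler, au niveau fini $n$, que les nouvelles identifications introduites par le passage de $\R'_\infty$ \`a $\R$ (essentiellement la recolle des quatre rayons de $T_\infty$ en $T_\infty$ lui-m\^eme) se r\'ealisent bien comme limites d'isomorphismes locaux respectant les motifs basiques $\mathcal{P}_n$. La sym\'etrie dyadique du proc\'ed\'e d'inflation devrait rendre accessible cette v\'erification. La conclusion finale est alors automatique: une relation d'\'equivalence AF minimale sur le Cantor est isomorphe \`a la relation cofinale $\R_\B$ d'un diagramme de Bratteli simple $\B$, et l'application de Vershik associ\'ee fournit le syst\`eme dynamique minimal annonc\'e sur l'ensemble de Cantor.
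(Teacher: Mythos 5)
Your proposal follows essentially the same route as the paper: both build the AF subrelation $\R'_\infty = \bigcup_n \R'_n$ from the compact open relations defined by the basic patterns $\mathcal{P}_n$, establish its minimality via the density of the four ends of the leaf through $T_\infty$, and conclude by the absorption result (corollaire 4.17 de \cite{gps04}), the Bratteli diagram $\B$ furnishing the minimal Cantor system. The three verification points you isolate are exactly the ones the paper addresses in the paragraph preceding the theorem, so the argument is correct and not a genuinely different approach.
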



\begin{thebibliography}{99}

{\small

\bibitem{bbg} J. Bellissard, R. Benedetti, J.M.
Gambaudo.  Spaces of Tilings, Finite Telesco\-pic
Approximations and Gap-Labelling.  {\em Comm. Math. Phys.}, {\bf   261}  (2006), 1-41.

\bibitem{B01} E. Blanc. {\em Propri\'et\'es
g\'en\'eriques des laminations}, Th\`ese UCB-Lyon 1, 2001.

\bibitem{ghys1} E. Ghys. Topologie des
feuilles g\'en\'eriques. {\em Ann. of Math.}, {\bf 141} (1995),
387--422.

\bibitem{ghys2} E. Ghys. Laminations par
surfaces de Riemann. {\em  Panor. Syntheses }, {\bf 8} (1999),
49--95.

\bibitem{gps04} T. Giordano, I. F. Putnam, C. F. Skau.
Affable equivalence relations and orbit struc\-ture of Cantor
dynamical systems.  {\em  Ergodic Theory Dynam. Systems}, {\bf 24} (2004),  441-475.

\bibitem{haefliger85} A. Haefliger. Pseudogroups of local isometries, in {\em Differential Geometry (Santiago de Compostela, 1984)}, Research Notes in Math. 131, Pitman, Boston, 1985, 174-197. 

\bibitem{kenyon} R. Kenyon. A group of paths in ${\mathbb R}^2$. {\em Trans. Amer. Math. Soc.}, {\bf 348} (1996),  3155-3172.

\bibitem{alvaro} A. Lozano Rojo. {\em Din\'amica transversa de laminaciones definidas por grafos repetitivos}. Tesis Doctoral UPV-EHU, 2008.  

\bibitem{rw} C. Radin, M. Wolff. Space
tilings and local isomorphism. {\em Geom. Dedicata}, {\bf 42}
(1992), 355--360.
}
\end{thebibliography}
\end{document}